\def\NAT@def@citea{\def\@citea{\NAT@separator}}
\setlist{noitemsep,topsep=0pt} 
\algrenewcommand{\algorithmicrequire}{\textbf{Input:}}
\algrenewcommand{\algorithmicensure}{\textbf{Output:}}
\algrenewcommand{\algorithmicif}{\textbf{If}}
\algrenewcommand{\algorithmicfor}{\textbf{For}}
\algrenewcommand{\algorithmicwhile}{\textbf{While}}
\algrenewcommand{\algorithmicend}{\textbf{End}}
\algrenewcommand{\algorithmicfunction}{\textbf{Function}}
\DeclarePairedDelimiter\abs{\lvert}{\rvert}%
\newtheorem{theorem}{Theorem}[section]
\newtheorem{lemma}[theorem]{Lemma}
\newtheorem{definition}[theorem]{Definition}
\newtheorem{example}[theorem]{Example}
\newtheorem{remark}{Remark}
\newtheorem{assumption}{Assumption}
\newcommand{\T}{^{\textrm{T}}}
\newcommand{\RR}{\mathbb{R}}
\newcommand{\QQ}{\mathbb{Q}}
\newcommand{\arr}{{\ensuremath{\A(\H)}}}
\newcommand{\SSn}{\mathcal{S}_n}
\newcommand{\xC}{\mathcal{C}}
\newcommand{\A}{\mathcal{A}}
\renewcommand{\H}{\mathcal{H}}
\newcommand{\xint}{\text{int}}
\newcommand{\bitsize}{\textsl{bitsize}}
\newcommand{\myparagraph}[1]{\paragraph*{#1.}} 
\begin{document}


\title{\bf A class of optimization problems motivated
by \\ rank estimators in robust regression}

\author{{\sc Michal~\v{C}ern\'y}\textsuperscript{a},
{\sc Miroslav~Rada}\textsuperscript{a,b}, 
{\sc Jarom\'\i r Antoch}\textsuperscript{a,c} \\ 
{\sc and Milan~Hlad\'\i k}\textsuperscript{a,d,}\thanks{E-mail: cernym@vse.cz (Michal \v{C}ern\'y), miroslav.rada@vse.cz (Miroslav~Rada), antoch@karlin.mff.cuni.cz (Jarom\'\i r~Antoch) and hladik@kam.mff.cuni.cz (Milan~Hlad\'\i k). Submitted to arXiv on October 13, 2019.}}

\date{\textsuperscript{a}Department of Econometrics, University of Economics,  
\\ Winston Churchill Square 4, 13067 Prague~3, Czech Republic \vspace{.3cm} 
\\ \textsuperscript{b}Department of Financial Accounting and Auditing, 
\\ University of Economics, Winston Churchill Square 4, \\ 13067 Prague~3,  Czech Republic \vspace{.3cm}
\\  
\textsuperscript{c}Department of Probability Theory and Mathematical Statistics, 
\\  Faculty of Mathematics and Physics, Charles University, 
\\   Sokolovsk\'a 83, 18675 Prague~8, Czech Republic \vspace{.3cm}
\\  
\textsuperscript{d}Department of Applied Mathematics, Faculty of Mathematics  and Physics, \\  Charles University, Malostransk\'{e} n\'{a}m\v{e}st\'{\i}~25, \\ 11000 Prague~1, Czech Republic}

\maketitle

\begin{abstract}
A rank estimator in robust regression is a minimizer of a function which depends (in addition to other factors) on the ordering of residuals but not on their values. Here we focus on the optimization aspects of rank estimators. We distinguish two classes of functions: the class with a continuous and convex objective function (CCC), which covers the class of rank estimators known from statistics, and also another class (GEN), which is far more general. We propose efficient algorithms for both classes. For GEN we propose an enumerative algorithm that works in polynomial time as long as the number of regressors is $O(1)$. The proposed algorithm utilizes the special structure of arrangements of hyperplanes that occur in our problem and is superior to other known algorithms in this area. For the continuous and convex case, we propose an unconditionally polynomial algorithm finding the exact minimizer, unlike the heuristic or approximate methods implemented in statistical packages.
\vspace{.2cm}

\noindent
\textbf{Keywords.}
Discrete optimization;
arrangement of hyperplanes;
continuous optimization;
ellipsoid method;
rank estimators;
computational complexity

\end{abstract}

\section{Introduction and statistical motivation}

\subsection{Linear regression}

Consider the linear regression relationship
\begin{equation}
y = X\beta^* + \varepsilon,
\label{eq:regr}
\end{equation}
where $y \in \mathbb{R}^n$ stands for the vector of observations of the dependent variable, $X \in \mathbb{R}^{n\times p}$ is the matrix of regressors, $\beta^* \in \mathbb{R}^p$ is the vector of regression parameters and $\varepsilon \in \mathbb{R}^n$ is the vector of (random) errors. We assume that we are given data $(X,y)$ and the task is to 
find an estimate $\widehat{\beta} \equiv \widehat{\beta}(X,y)$ of the unknown vector $\beta^*$ of the true regression coefficients.

There is a rich statistical theory on the estimators $\widehat\beta$
with many desirable properties, such as unbiasedness or consistency, based on various stochastic assumptions on the random errors 
$\varepsilon$. 
In this paper we do not need to go into details: we treat an important class of estimators studied in robust statistics, 
called \emph{rank estimators}, as a particular class of optimization problems.
However, 
the statistical theory is essential for understanding \emph{under which circumstances the usage of rank estimators is appropriate}. 
We recommend \cite{hettmansperger:2010:RobustNonparametricStatistical} 
as a reference book with an exhaustive list of sources, and also the seminal 1970's papers \cite{Jureckova, Jaeckel, technometrics}.

This text talks about rank estimators only from the optimization viewpoint: the question addressed here is \emph{how to compute them efficiently}. 

From the optimization viewpoint, we formulate the problem at a 
higher level of generality
than it is usual in statistics. We will define two classes of functions
\begin{itemize}
\item $\textsf{GEN}$ (``GENeral case'') and
\item $\textsf{CCC}$ (``Continuous and Convex Case'') 
\end{itemize}
and design
algorithms for them. Rank estimators, motivating the definition, form a subclass of $\textsf{CCC}$. 
A detailed description of $\textsf{GEN}$ and $\textsf{CCC}$ will 
be given in Definitions~\ref{def:GEN} 
and \ref{def:CCC}, respectively.

\subsection{Rank estimators}

A \emph{rank estimator} of regression parameters in 
\eqref{eq:regr} can be defined as a minimizer of the optimization problem
\begin{equation}
\min_{\beta \in \mathbb{R}^n} 
 F(\beta) \coloneqq \sum_{i=1}^n a_i^{\beta} (y_i - x_i\T\beta), 
\label{eq:F}
\end{equation}
where $x_i\T$ is the $i$th row of $X$ and $a_1^{\beta}, \dots, 
a_n^{\beta}$ are real-valued coefficients depending on $\beta$ in a specific way: the coefficients $a_1^{\beta}, \dots, a_n^{\beta}$ 
depend \emph{only 
on the ordering of residuals} 
$$r_i^\beta \coloneqq y_i - x_i\T\beta,
$$ 
\emph{but not on their values}. Let us state it more formally.

\begin{assumption} \label{ass:rank}
Let $\SSn$ denote the set of permutations of $\{1, \dots, n\}$.
We assume that 
\begin{itemize}
\item[(a)] the matrix $(X,y)$ contains no pair of identical rows,
\item[(b)]
the function 
$\beta \mapsto (a_1^{\beta}, \dots, a_n^{\beta})$ fulfills the following property: if $\beta, \beta' \in \mathbb{R}^p$ and 
$\pi \in \SSn$ satisfy
\begin{equation}
r^{\beta}_{\pi(1)} <
r^{\beta}_{\pi(2)} < \cdots <
r^{\beta}_{\pi(n)}
\quad \text{and}\quad
r^{\beta'}_{\pi(1)} <
r^{\beta'}_{\pi(2)} < \cdots <
r^{\beta'}_{\pi(n)},
\label{eq:perm}
\end{equation}
then $a_i^{\beta} = a_i^{\beta'}$ for all $i = 1, \dots, n$.
\end{itemize}
\end{assumption}

\begin{remark} We adopt Assumption \ref{ass:rank}(a) as an auxiliary property to make the forthcoming presentation as simple as possible. However,
the property can be regarded as excessively restrictive because identical observations can occur frequently in various statistical applications. 
It is obvious that if $(x_j\T, y_j) = (x_k\T, y_k)$ for some $j \neq k$, then $r_j^{\beta} = r_k^{\beta}$ for all $\beta$ and the strict inequalities in (\ref{eq:perm}) can never be satisfied. However, we show that 
Assumption~\ref{ass:rank}(a) can be deleted if Assumption~\ref{ass:rank}(b) is appropriately (but less transparently) reformulated. Consider Assumption~1(b$'$)
in the following form: \emph{whenever $\beta, \beta' \in \mathbb{R}^n$ and $\pi \in \SSn$ satisfy 
$r^{\beta}_{\pi(j)} < r^{\beta}_{\pi(k)}$ and $r^{\beta'}_{\pi(j)} < r^{\beta'}_{\pi(k)}$ for all 
$1 \leq j < k \leq n$ such that
$(x_{\pi(j)}\T, y_{\pi(j)}) \neq (x_{\pi(k)}\T, y_{\pi(k)})$, then 
$a_i^{\beta} = a_i^{\beta'}$ for all $i = 1, \dots, n$}. Then it is not difficult to verify that the main results will remain valid.
\end{remark}


First we will study the function $F(\beta)$ at a high level of generality (``\textsf{GEN}''), but later---in Section~\ref{sect:CCC}---we will impose further assumptions on $F(\beta)$ implying continuity and convexity (``\textsf{CCC}'').
Before we do that, to put our work in the context of statistics which motivated this analysis, we shortly comment on the usual choices of the $a$-coefficients in robust estimation of regression models.

\subsection{Example: Typical rank estimators in robust statistics}
\label{sect:statistika}

In the theory of rank estimators \cite{hettmansperger:2010:RobustNonparametricStatistical, Jureckova, Jaeckel, technometrics}, 
it is usual 
to fix in advance a family of coefficients $\alpha_1 \le \alpha_2 \le \dots\le \alpha_n$
such that $\sum_{i=1}^n \alpha_i = 0$ and define
$$
a_i^\beta = \alpha_{\pi^{-1}(i)},
$$
where $\pi \in \SSn$ is any permutation satisfying 
$r^{\beta}_{\pi(1)} \leq
r^{\beta}_{\pi(2)} \leq \cdots \leq
r^{\beta}_{\pi(n)}$. It can be shown that if there are more such permutations $\pi$, a choice of any of them results in the same value 
$F(\beta) = \sum_{i=1}^n \alpha_{\pi^{-1}(i)}r_i^{\beta}$ (for details see Section~\ref{sect:CCC}). It follows that $F(\beta)$ is a well-defined function. In addition, in 
Section~\ref{sect:CCC}
we will show that the function $F(\beta)$ is piecewise linear, continuous and convex; and it is also easy to show that it is bounded from below. It means that the minimizer of $F(\beta)$ --- the rank estimator $\widehat\beta$ ---
exists (although it need not be unique).

The typical choices are of the form $\alpha_i = \varphi(i/(n+1))$, $i = 1, \dots, n$, where
$\varphi(t)$ is a \emph{score function} satisfying some regularity properties, such as 
$\int_0^1 \varphi(t)\, \text{d}t = 0$.\footnote{Sometimes it is also required $\int_0^1 \varphi^2(t)\, \text{d}t = 1$
and even central symmetry of the graph of $\varphi$ around $[1/2, 0]$; such properties are essential for asymptotic statistical properties, 
but do not play a role in this text.}
Some well-known representatives include
\begin{itemize}
    \item
$\varphi(t) = \text{sgn}(t-1/2)$ (sign score function), 
\item $\varphi(t) = \sqrt{12}(t-1/2)$ (Wilcoxon score function) or
\item $\varphi(t) = \Phi^{-1}(t-1/2)$ (van der Waerden score function), 
where $\Phi^{-1}$ is the quantile function of the standard Gaussian distribution. 
\end{itemize}

\subsection{Timetable}

The structure of the paper is as follows:
\begin{itemize}
\item
\textbf{Section \ref{sect:general}: The general case (\textsf{GEN}) --- a polynomial method as long as $p = O(1)$.} (Recall that $p$ stands for the number of regressors.)
Here, we will consider the problem 
\eqref{eq:F} at a high level of generality. That is,
we will not impose further assumptions on the $a$-coefficients in
\eqref{eq:F}. The function $F$ can be very general and tough; it need not be continuous and there can be a different set of $a$-coefficients corresponding to each permutation $\pi$ of residuals, see \eqref{eq:perm}. Then, minimization of $F$ is a combinatorial optimization problem. But still, the main result of Section~\ref{sect:general} 
is a design of a much better algorithm than the 
straightforward brute-force enumeration of permutations $\pi \in \SSn$. 
As a corollary, we get a polynomial-time method as long as $p = O(1)$. 

\item
\textbf{Section \ref{sect:CCC}: The continuous and convex case (\textsf{CCC}) --- an unconditionally polynomial method.}
We impose additional assumptions on $F$ implying continuity and convexity. In this case we design an unconditionally polynomial method. The method works for a broader class of functions that those described in Section~\ref{sect:statistika}; for example, the class involves also $F$-functions unbounded from below. (Thus, we will have to design a method to test unboundedness, even if $F$-functions in statistics are always bounded from below.)
\end{itemize}

\section{Geometry of arrangements of hyperplanes}\label{sect:arrng}

\subsection{Cells of an arrangement}

Given a set $A \subseteq \mathbb{R}^p$, let $\xint(A)$ denote its interior.
Recall that $r_i^{\beta}$ is a shorthand for the $i$th residual of the form $r_i^{\beta} = y_i - x_i\T\beta$.
Given a permutation $\pi \in \SSn$, define
\begin{equation}
    C^{\pi} \coloneqq \left\{\beta \in \mathbb{R}^p\ \Big| \ 
r^{\beta}_{\pi(1)} \leq 
r^{\beta}_{\pi(2)} \leq \cdots \leq
r^{\beta}_{\pi(n)}\right\}.
\label{eq:Cpi}
\end{equation}
Observe that $C^{\pi}$ is a convex polyhedron in $\mathbb{R}^p$,
that $\bigcup_{\pi\in\SSn}C^{\pi} = \mathbb{R}^p$ and 
that $\xint(C^{\pi}) \cap \xint(C^{\pi'}) = \emptyset$ whenever $\pi \neq \pi'$. 
If the polyhedron $C^{\pi}$ is full-dimensional, we call it 
\emph{cell}. 

The term comes from the theory of arrangements of hyperplanes. Consider the system $\H$ of hyperplanes
\begin{equation}
\label{eq:hyperplanes}
\underbrace{
 H_{ij} \coloneqq    \left\{\beta\in\RR^p\  \Big|\ r_i^\beta = r_j^\beta\right\} 
 = \left\{\beta\in\RR^p\ \Big|\ (x_i - x_j)\T\beta = y_i - y_j \right\}, \ 1 \leq i < j \leq n}_{\eqqcolon\H}.
\end{equation}
The system of hyperplanes $\H$ ``cuts'' the space $\mathbb{R}^p$ into a finite number of full-dimensional regions
and these are usually called (full-dimensional) cells. (We will drop the adjective ``full-dimensional'' for brevity.) More precisely speaking, 
$$
\RR^n \setminus \bigcup_{1 \leq i < j \leq n} H_{ij} = \bigcup_{\pi \in \SSn} \xint(C^{\pi}).
$$
Let $\xC$ denote the system of cells (i.e., the set of permutations $\pi \in \SSn$ such that $C^{\pi}$ is full-dimensional). 
Lemma~\ref{lem:number:cells} gives a bound on the size of $\xC$.

\begin{lemma}[\cite{buck:1943:Partitionspace}, \cite{zaslavsky:1975:FacingarrangementsFacecount}]
    \label{lem:number:cells}
An arrangement of $N$ hyperplanes in $\mathbb{R}^p$
has at most $\zeta(N,p) \coloneqq \sum_{i=0}^{p} \binom{N}{i}$ 
(full-dimensional) cells.
\end{lemma}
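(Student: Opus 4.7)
The plan is a standard double induction on $(N, p)$ based on a ``sweep'' argument: remove one hyperplane, invoke the bound in both dimensions, and show that reinserting it adds at most $\zeta(N, p-1)$ new cells. Base cases are immediate: for $N = 0$ the whole of $\mathbb{R}^p$ is the unique cell and $\zeta(0, p) = 1$, while for $p = 0$ the ambient space degenerates to a point.

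For the inductive step, let $\mathcal{H}$ be any arrangement of $N+1$ hyperplanes in $\mathbb{R}^p$ and single out one $H \in \mathcal{H}$. By the inductive hypothesis applied to $\mathcal{H} \setminus \{H\}$, the arrangement without $H$ has at most $\zeta(N, p)$ cells. Reintroducing $H$ leaves a cell $C$ untouched if $H$ misses $\mathrm{int}(C)$, and splits it into two cells otherwise. The split cells are precisely those $C$ with $H \cap \mathrm{int}(C) \neq \emptyset$, and for each such $C$ the set $H \cap \mathrm{int}(C)$ is a nonempty, open, convex (hence connected) subset of $H$ that avoids every hyperplane in $\mathcal{H} \setminus \{H\}$. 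Different split cells yield disjoint such sets, and these sets are precisely the connected components of
\[
H \setminus \bigcup_{H' \in \mathcal{H} \setminus \{H\}} (H \cap H'),
\]
i.e., the cells of the induced $(p-1)$-dimensional arrangement inside $H \cong \mathbb{R}^{p-1}$. Since this induced arrangement involves at most $N$ hyperplanes in dimension $p-1$, the inductive hypothesis bounds the number of splits by $\zeta(N, p-1)$. Summing gives $|\mathcal{C}(\mathcal{H})| \le \zeta(N, p) + \zeta(N, p-1)$, and the Pascal-style identity
\[
\zeta(N, p) + \zeta(N, p-1) = \zeta(N+1, p),
\]
which follows by summing $\binom{N+1}{i} = \binom{N}{i} + \binom{N}{i-1}$ over $0 \le i \le p$, closes the induction.

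The main obstacle, as I see it, is justifying the geometric correspondence between split cells of the ambient arrangement and cells of the induced $(p-1)$-dimensional arrangement inside $H$. This is where one has to be slightly careful with degenerate configurations: hyperplanes of $\mathcal{H} \setminus \{H\}$ that are parallel to $H$ contribute nothing inside $H$, and pairs $H', H''$ with $H \cap H' = H \cap H''$ collapse into a single hyperplane of the induced arrangement. Such degeneracies only decrease the number of effective hyperplanes (and hence the induced cell count) and so preserve the upper bound, but they should be mentioned explicitly to make the inductive step airtight. Apart from this point, the argument is routine.
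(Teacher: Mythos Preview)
Your argument is the classical double induction and is correct, including your handling of degeneracies in the induced arrangement. Note, however, that the paper does not give its own proof of this lemma: it simply states the bound and cites Buck and Zaslavsky, so there is nothing in the paper to compare against beyond observing that you have supplied the standard proof those references contain.
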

We have $|\xC| \leq \zeta(N,p)$ and this number 
can be easily bounded by $O(N^{p})$.
The arrangement \eqref{eq:hyperplanes} has $N = \binom{n}{2} = \frac{1}{2}n(n-1)$ hyperplanes;
thus 
\begin{equation}\label{eq:zeta}
|\xC| \leq \zeta\left(\tfrac{1}{2}n(n-1),p\right) = O(n^{2p}). 
\end{equation}

\myparagraph{Notation}
If $\mathcal{S}$ is a system of hyperplanes, then $(\mathcal{S})$ is a shorthand for the arrangement defined by 
the hyperplanes in $\mathcal{S}$. In particular, the arrangement defined by \eqref{eq:hyperplanes} is denoted by \arr{}.


\subsection{Faces of a general dimension}

If we are given a nonempty system of cells $\pi^1, \pi^2, \dots, \pi^k \in \xC$
and $\Phi \coloneqq C^{\pi^1} \cap C^{\pi^2} \cap \cdots
\cap C^{\pi^k} \neq \emptyset$, then the set $\Phi$ is called \emph{face} 
of the arrangement~\arr. The \emph{dimension} of a face is defined as its affine dimension. 

For example, a cell is a $p$-dimensional face. A cell of dimension $p-1$ is called \emph{facet}. 
If the face $C^{\pi} \cap C^{\pi'}$ is a facet, we say that cells
$C^{\pi}$ and $C^{\pi'}$ are \emph{neighbors}. A face of zero dimension is called \emph{vertex}.

\section{The general case (\textsf{GEN})}\label{sect:general}

\subsection{A regularity condition}

Assumption~\ref{ass:rank} implies that the $a$-coefficients
are constant on $\xint(C^\pi)$ for a $\pi \in \xC$. That is,
for $\pi\in\xC$,
the restriction of $F(\beta)$ onto $\xint(C^\pi)$ is a linear function.
Thus, when restricted to $\bigcup_{\pi\in\xC}\xint(C^{\pi})$, the function $F(\beta)$ is piecewise linear.
However, Assumption~\ref{ass:rank} does not restrict how the function $F$ should behave on faces of dimension $\leq p -1$. We need to control the behavior of $F(\beta)$ on them. Put another way, Assumption \ref{ass:rank} determines the behaviour
of $F(\beta)$ on $\bigcup_{\pi\in\xC}\xint(C^{\pi})$ and we need to
impose a ``reasonable'' condition on how to extend $F(\beta)$ 
also to
$\mathbb{R}^p \setminus \bigcup_{\pi\in\xC}\xint(C^{\pi})$.

\begin{assumption}
$F$ is lower semicontinuous.
\label{as:reg}
\end{assumption}


Said informally: consider that the point $\beta$ 
lies in the intersection of two neighbouring cells (i.e.,~$\beta$ is in a facet) and that
$F$ is discontinuous there. Say that we want $F(\beta)$ to inherit 
the behaviour either from the first or the second cell. We must decide
whether $F(\beta)$ should be the value as if $\beta$ belonged to the 
first or to the second cell. By discontinuity, either of the extensions would result in a different value $F(\beta)$. 
Assumption \ref{as:reg} tells us that the extension resulting in the lower value $F(\beta)$ is preferred. 

    %

\begin{definition} \textsf{GEN} is the class of $F$-functions from
\eqref{eq:F} satisfying Assumptions~\ref{ass:rank} and \ref{as:reg}.
\label{def:GEN}
\end{definition}

\begin{example} Figure~1 depicts 
an example of a function $F \in \textsf{GEN}$ with $\beta \in \mathbb{R}^2$ and $n = 5$. The thick lines
are the hyperplanes 
\eqref{eq:hyperplanes}. There is an arrow placed in an unbounded cell $C^{\pi}$ on which $F(\beta)$ is unbounded from below. (There are more such cells.) 
\end{example}

\subsection{High-level outline of the algorithm}
Assumption \ref{as:reg} implies that minimization of $F(\beta)$
is equivalent to solving the optimization problem
\begin{equation}
\min_{\pi\in\xC} \min_{\beta \in C^{\pi}} 
\sum_{i=1}^n a_i^{\widetilde\beta_\pi}(y_i - x_i\T\beta),
\label{eq:hlavni}
\end{equation}
where $\widetilde\beta_\pi$ is an arbitrary point in $\xint(C^{\pi})$.
By Assumption~\ref{ass:rank}, the vector $(a_1^{\beta}, \dots, a_n^{\beta})$ is the same 
for all $\beta \in \xint(C^{\pi})$ and
the choice of an \emph{arbitrary} representative
$\widetilde\beta_\pi \in \xint(C^{\pi})$ is correct.

It follows that, for a given $\pi \in \xC$, the inner optimization 
problem
\begin{equation}
\min_{\beta \in C^{\pi}} 
a^{\widetilde\beta_\pi}(y_i - x_i\T\beta)\label{eq:inner}
\end{equation}
is a linear programming problem which can be solved efficiently 
(indeed, the constraints ``$\beta \in C^{\pi}$'' are linear).

\begin{figure}[t]
\centering
\includegraphics[width=10cm]{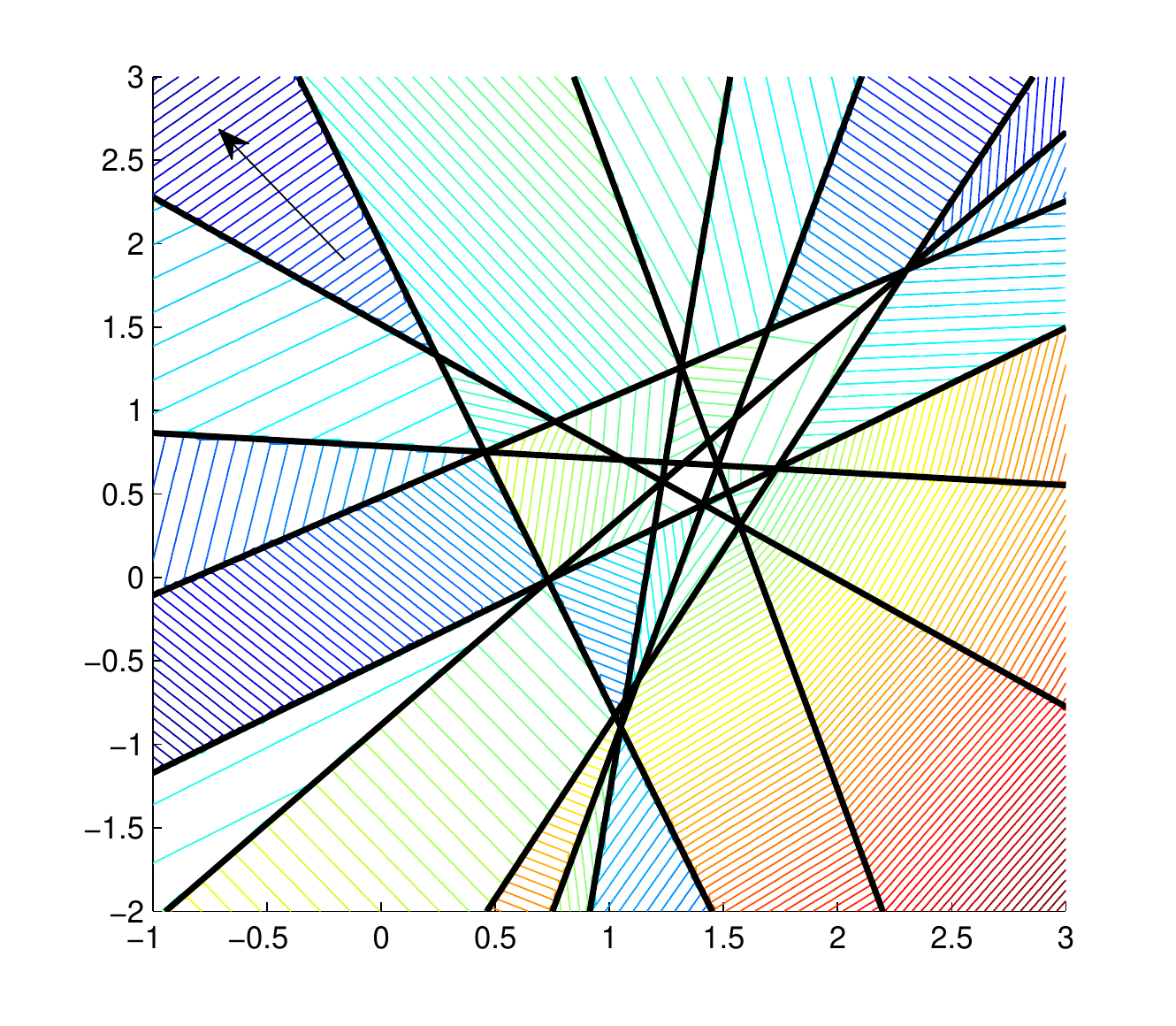}
\caption{An example of the arrangement \arr{} 
in $\mathbb{R}^2$ with $n = 5$
and contour lines of a sample function $F\in\textsf{GEN}$. The arrow depicts an unbounded cell on which $F(\beta)$ is unbounded.}
\end{figure}

The problem \eqref{eq:hlavni} of minimizing $F(\beta)$ reduces to an enumeration
of $\pi \in \xC$. What remains is to show 
that the cells $\pi \in \xC$ can be enumerated in a reasonable way
so that \emph{not all potential permutations $\pi \in \SSn$} are to be tested. Observe that if $p \ll n$, there are many permutations $\pi \in \SSn$
such that $\pi \not\in \xC$; this follows from the bound
(\ref{eq:zeta}). Such permutations need to be avoided and {\em the core of the forthcoming method lies exactly in this}.

In this Section we will assume that
\begin{equation}
p = O(1).\label{eq:poone}
\end{equation}
Then the bound $O(n^{2p})$ from (\ref{eq:zeta}) is a polynomial in $n$. We will describe a method
for enumeration of $\xC$ in time just slightly worse than $O(n^{2p})$; see Theorem~\ref{the:correctness:and:complexity}\ref{enu:the:enumeration:complexity} for details. We get the main result of Section~\ref{sect:general}:

\begin{theorem}
If $p = O(1)$, a function $F \in \textsf{GEN}$ can be minimized in polynomial time.
\end{theorem}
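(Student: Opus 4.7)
The plan is to reduce the minimization of $F$ to the equivalent formulation \eqref{eq:hlavni}, which is a cell-by-cell enumeration combined with a linear program on each cell, and then bound the total work as polynomial in $n$ when $p = O(1)$.

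First, I would justify the reformulation \eqref{eq:hlavni}. By Assumption~\ref{ass:rank}, the $a$-coefficients are constant on $\xint(C^\pi)$ for each $\pi \in \xC$, so the restriction of $F$ to $\xint(C^\pi)$ coincides with the linear function $\beta \mapsto \sum_i a_i^{\widetilde\beta_\pi}(y_i - x_i\T\beta)$ for any fixed $\widetilde\beta_\pi \in \xint(C^\pi)$. Since $\bigcup_{\pi\in\xC}\cl(\xint(C^\pi)) = \mathbb{R}^p$ and $F$ is lower semicontinuous by Assumption~\ref{as:reg}, for every $\beta^* \in C^\pi$ we have $F(\beta^*) \le \liminf_{\beta\to\beta^*,\,\beta\in\xint(C^\pi)} F(\beta)$. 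Hence the infimum of $F$ over $\mathbb{R}^p$ coincides with $\min_{\pi\in\xC}\inf_{\beta\in C^\pi}\sum_i a_i^{\widetilde\beta_\pi}(y_i - x_i\T\beta)$, which is exactly \eqref{eq:hlavni}.

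Second, for any fixed $\pi \in \xC$, the inner problem \eqref{eq:inner} is a linear program: the objective is linear in $\beta$ and $C^\pi$ is described by the $n-1$ linear inequalities $(x_{\pi(i)} - x_{\pi(i+1)})\T\beta \le y_{\pi(i)} - y_{\pi(i+1)}$, $i = 1,\dots,n-1$, whose bit-lengths are polynomial in the input. This LP is solvable in polynomial time by any strongly/weakly polynomial LP method, and unboundedness from below can also be detected in polynomial time; in that case we declare $F$ unbounded from below.

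Third, and this is the main obstacle, we must enumerate $\xC$ without touching all $n!$ permutations. Lemma~\ref{lem:number:cells} and the bound \eqref{eq:zeta} guarantee that $|\xC| = O(n^{2p})$, which is polynomial in $n$ under the assumption \eqref{eq:poone}. What is still needed is an enumeration procedure whose running time is polynomial in $|\xC|$ (not in $n!$); this is precisely the content of the cell-traversal algorithm developed in the remainder of Section~\ref{sect:general} and formalized in Theorem~\ref{the:correctness:and:complexity}. Accepting that result, the overall scheme --- enumerate the $O(n^{2p})$ cells of $\arr{}$, solve one polynomial-size LP per cell, and return the minimum (or report unboundedness) --- runs in time polynomial in $n$, proving the theorem. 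The technically demanding ingredient is the cell-enumeration step; the reformulation and the LP subroutine are routine once cell-wise linearity and lower semicontinuity are established.
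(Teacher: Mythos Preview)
Your proposal is correct and mirrors the paper's own argument: the paper states this theorem as a direct consequence of the reformulation~\eqref{eq:hlavni} (justified via Assumptions~\ref{ass:rank} and~\ref{as:reg}), the polynomial bound~\eqref{eq:zeta} on $|\xC|$ when $p=O(1)$, and the forthcoming cell-enumeration result Theorem~\ref{the:correctness:and:complexity}, with a polynomial-size LP solved on each cell. Your write-up adds a bit more detail on why lower semicontinuity yields the equivalence and explicitly notes the unboundedness case, but the structure and the deferred reliance on Theorem~\ref{the:correctness:and:complexity} are identical to the paper's treatment.
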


In Section \ref{sec:technical:issue} we discuss a problem of representation of input. Section \ref{sec:enumeration:cells:general} describes the enumeration algorithm itself. 

\subsection{A technical issue: Representation of the input}
\label{sec:technical:issue}

Under Assumptions \ref{ass:rank} and \ref{as:reg}, every fulldimensional cell $C^\pi$ can be equipped with its own set of $a_i^{\beta}$-coefficients. Since every cell is determined by a permutation $\pi \in S_n$, the number of the sets of $a_i^{\beta}$ coefficients can be as huge as $n!$. If these coefficients are considered as a part of an input instance, ``everything'' becomes polynomially solvable.


We thus assume that, given $\pi\in\SSn$, the $a$-coefficients are returned from an external oracle or computed on-the-fly (but the time for their computation is disregarded). We say that the bitsize of the input corresponds to the bitsize $L$ of data $(X,y)$ (which are assumed to be rational) and that the bitsize of the vector 
$(a^{\beta}_1, \dots, a^{\beta}_n)$ is also bounded by a polynomial in $L$ for every $\beta$.

Here we use the term ``bitsize'' informally. A reader can check Section~\ref{sect:BigL}, where bitsizes of rational numbers and some Big-$L$ arguments are elaborated on with more details.

\subsection{Enumeration of cells}
\label{sec:enumeration:cells:general}

We propose a new algorithm called \emph{RSIncEnu} for enumeration of cells of the arrangement \arr. 
RSIncEnu modifies the existing algorithm FlIncEnu \cite{rada:2018:NewAlgorithmEnumeration} 
designed for general arrangements. The modification utilizes special properties of \eqref{eq:hyperplanes}. 
The key property is that every cell can have only a limited number of neighbor cells compared to general arrangements, see Lemma \ref{lem:numnei} for details. We prove that RSIncEnu has a better time complexity than FlIncEnu and that it is superior to other known algorithms with similar properties.

More precisely, FlIncEnu (and, in turn, RSIncEnu) has two important properties: \emph{output sensitivity} and \emph{compactness}. Output sensitivity means that time complexity can be related not only to the size of input, but also to the size of the output.\footnote{On the contrary, algorithms for decision problems give just a single-bit YES/NO output. This is an example where the computation time cannot be related to the size of output.}  This allows us 
to express time complexity per unit of output (in our case: per cell in~$\xC$). This is particularly useful for enumeration algorithms, where the size of output is often exponential w.r.t.~the size of input. 

Compactness means that space complexity is polynomial in the size of input. In particular, during the computation, it is not necessary to store the entire output constructed so far. (In other words, the output can be printed as a stream.) This is fully satisfactory for our problem---for a cell we just solve the LP \eqref{eq:inner} and then the cell can be dropped.

Aside of FlIncEnu, there exist other output sensitive and compact algorithms for enumeration of cells of arrangements, namely ReverseSearch algorithm and its improvements \cite{avis:1996:Reversesearchenumeration,sleumer:1998:Outputsensitivecellenumeration,ferrez:2005:Solvingfixedrank}. Actually, RSIncEnu can be proven to be superior to them; the proof idea is analogous to \cite{rada:2018:NewAlgorithmEnumeration}.

\myparagraph{Number of neighbours}
Recall that the arrangement \arr{} is given by $\binom{n}{2}$ hyperplanes $H_{ij}$, $1 \leq i < j \leq n$.
Each hyperplane $H_{ij}$ determines two halfspaces 
\begin{equation}
    H^-_{ij} \coloneqq \left\{\beta\ \Big|\ r_i^\beta \le r_j^\beta\right\} \text{\ \ and\ \ } H^+_{ij} \coloneqq \left\{\beta\ \Big|\ r_i^\beta \ge r_j^\beta\right\}.
    \label{eq:halfspaces}
\end{equation}

By \eqref{eq:Cpi}, a cell is defined as an intersection of $n-1$ halfspaces only (since the membership to other halfspaces can be decided by transitivity). We get an important property:

\begin{lemma}
\label{lem:numnei}
A cell of \arr{} can have at most $n-1$ neighbor cells.
\end{lemma}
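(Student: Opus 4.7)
The plan is to reduce the bound on the number of arrangement-neighbours of a cell $C^\pi$ to the elementary fact that a polyhedron in $\mathbb{R}^p$ described by $m$ linear inequalities has at most $m$ facets (i.e.,~faces of dimension $p-1$ in the polyhedral sense).

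First I would argue that $C^\pi$ admits a description by only $n-1$ halfspaces. Although \eqref{eq:Cpi} is written using $\binom{n}{2}$ pairwise ordering conditions, transitivity of ``$\le$'' implies that every non-adjacent constraint $r_{\pi(j)}^\beta \le r_{\pi(k)}^\beta$ with $k-j\ge 2$ is already forced by the chain of adjacent ones $r_{\pi(j)}^\beta \le r_{\pi(j+1)}^\beta \le \cdots \le r_{\pi(k)}^\beta$. Consequently $C^\pi = \bigcap_{k=1}^{n-1} H^-_{\pi(k),\pi(k+1)}$, so $C^\pi$ has at most $n-1$ polyhedral facets.

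Next I would match each arrangement-neighbour of $C^\pi$ injectively to a polyhedral facet of $C^\pi$. If $C^{\pi'}$ is such a neighbour, then $\Phi \coloneqq C^\pi \cap C^{\pi'}$ has affine dimension $p-1$ and lies in the relative boundary of $C^\pi$, hence it is contained in some polyhedral facet $F$ of $C^\pi$. The assignment $C^{\pi'} \mapsto F$ is injective: the affine hull of $F$ is a single hyperplane $H$ (coinciding with at least one $H_{ij} \in \H$), and a small perturbation of a generic relative-interior point of $F$ to the side of $H$ opposite to $C^\pi$ lands in the interior of a uniquely determined open cell, which must be the interior of $C^{\pi'}$. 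Combining these two steps gives the bound $n-1$.

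The most delicate point I anticipate is the injectivity claim in degenerate configurations where several of the hyperplanes $H_{\pi(k),\pi(k+1)}$ coincide as affine subspaces of $\mathbb{R}^p$. There, crossing the common hyperplane $H$ flips the orderings of several adjacent pairs at once, but the resulting $\pi'$ is still uniquely pinned down by $F$ (equivalently, by $H$ together with the index pairs whose hyperplanes coincide with $H$), so the counting argument is not affected and the bound $n-1$ still holds.
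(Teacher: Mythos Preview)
Your proposal is correct and follows the same route as the paper: the paper simply observes (in the sentence immediately preceding the lemma) that by transitivity a cell is the intersection of only $n-1$ halfspaces, and takes the bound on neighbours as an immediate consequence. Your write-up is more careful than the paper's one-line justification---in particular your explicit injection from arrangement-neighbours to polyhedral facets and your discussion of the degenerate case where several $H_{\pi(k),\pi(k+1)}$ coincide---but the underlying idea is identical.
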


Lemma~\ref{lem:numnei} shows a difference between arrangement \arr{} and a general arrangement $\mathcal{A}(\mathcal{S})$. In a general arrangement, all of the $|\mathcal{S}|$ halfspaces might be necessary to describe a cell. Lemma~\ref{lem:numnei} 
captures the crucial property leading to a ``good'' algorithm.

\myparagraph{Redundant hyperplanes}
Note that if there are two identical hyperplanes in an arrangement, one of them can be removed without affecting the structure of the arrangement. In our case, this can occur if
$x_i - x_j = \eta (x_k - x_{\ell})$ and 
$y_i - y_j = \eta (y_k - y_{\ell})$ 
for some distinct indices $i,j,k,\ell$ 
and $\eta \neq 0$.
 
A hyperplane $H_{k\ell}$ is said to be \emph{redundant w.r.t. $H_{ij}$} if $H_{ij} = H_{k\ell}$ and $(i,j) \prec  (k,\ell)$, where $\prec$ is a fixed ordering, say lexicographic.

To simplify the forthcoming discussion, define the list $\mathcal{R}$ of redundant hyperplanes as 
\begin{align}
    \label{eq:R}   \mathcal{R}&\coloneqq \{ \{k,\ell\}\ |\ \exists(i,j)\ \text{s.t.}\ 1 \leq i < j \leq n,\ H_{ij} = H_{k\ell}\ \text{and}\ (i,j) \prec (k,\ell) \}.
\end{align}

Clearly, $\arr$ and $\A(\H\setminus \mathcal{R})$ are the same arrangements.

\myparagraph{Tight hyperplanes}
Recall that neighbor cells are cells that share a common facet. In other words, if $C^{\pi_1}$ and $C^{\pi_2}$ are neighbor cells, they are separated by \emph{exactly one} hyperplane from $\mathcal{H} \setminus \mathcal{R}$, say $H_{ij}$. The permutations $\pi_1, \pi_2$ differ in the transposition of elements $i$ and $j$ (and also in transpositions of pairs $\{k,\ell\}$ such that $H_{k\ell}$ is a redundant hyperplane w.r.t. $H_{ij}$). 

The following property of neighbor cells will be useful (the proof is easy): 
\begin{lemma}
    \label{def:tight}
    Let $C^{\pi_1}$ and $C^{\pi_2}$ be neighbor cells. Then there are points $\beta^1 \in \xint(C^{\pi_1})$ and $\beta^2 \in \xint(C^\pi_2)$ such that the line segment $\beta_1$--$\beta_2$ crosses exactly one hyperplane from $\mathcal{H}\setminus\mathcal{R}$. 
\end{lemma}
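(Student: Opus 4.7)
The plan is to exploit the fact that neighbor cells share a facet of dimension $p-1$, whose affine hull is a unique hyperplane from $\H\setminus\mathcal{R}$, and then to construct $\beta^1,\beta^2$ by perturbing a generic point on that facet in the two opposite normal directions. Concretely, let $F \coloneqq C^{\pi_1} \cap C^{\pi_2}$. By the definition of ``neighbor'', $F$ is a facet, so $\dim F = p-1$ and $\text{aff}(F)$ is a hyperplane. Since $F \subseteq \partial C^{\pi_1}$, some hyperplane of $\H$ contains $F$; after discarding redundancies, exactly one $H_{ij} \in \H\setminus\mathcal{R}$ contains $F$. (If two distinct non-redundant hyperplanes both contained $F$, they would both coincide with $\text{aff}(F)$ and hence be equal, contradicting the definition of $\mathcal{R}$.)

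Next, I would pick a point $\beta^* \in \text{relint}(F)$ lying on no hyperplane in $\H\setminus\mathcal{R}$ other than $H_{ij}$. Such a $\beta^*$ exists because for every other $H_{k\ell} \in \H\setminus\mathcal{R}$ the intersection $F \cap H_{k\ell}$ has dimension at most $p-2$ (otherwise $H_{k\ell}$ would contain $\text{aff}(F)$, contradicting non-redundancy), so only a finite union of lower-dimensional sets needs to be avoided inside the $(p-1)$-dimensional set $\text{relint}(F)$.

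Finally, let $v$ be a normal vector to $H_{ij}$ oriented so that $\beta^* + \epsilon v \in C^{\pi_1}$ for all sufficiently small $\epsilon > 0$; then necessarily $\beta^* - \epsilon v \in C^{\pi_2}$, because locally near $\beta^*$ the two cells occupy the two opposite open halfspaces of $H_{ij}$. Choose $\epsilon > 0$ small enough that the open ball $B(\beta^*, 2\epsilon)$ meets no hyperplane of $\H\setminus\mathcal{R}$ other than $H_{ij}$, and set $\beta^1 \coloneqq \beta^* + \epsilon v$, $\beta^2 \coloneqq \beta^* - \epsilon v$. Then $\beta^1 \in \xint(C^{\pi_1})$, $\beta^2 \in \xint(C^{\pi_2})$, and the segment $[\beta^1,\beta^2] \subset B(\beta^*, 2\epsilon)$ crosses $H_{ij}$ exactly once (at $\beta^*$) and meets no other hyperplane from $\H\setminus\mathcal{R}$. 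The only step requiring a modicum of care is the uniqueness of the hyperplane from $\H\setminus\mathcal{R}$ containing $F$; this, like the genericity claim, is a consequence of the elementary fact that a $(p-1)$-dimensional convex set meets any hyperplane different from its affine hull in a set of dimension at most $p-2$.
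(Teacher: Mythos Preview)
Your argument is correct and is exactly the standard reconstruction one would expect. The paper does not actually supply a proof of this lemma; it merely states that ``the proof is easy'' and moves on, so there is no alternative approach to compare against---your facet-plus-normal-perturbation construction is the natural way to make the claim precise.
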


The hyperplane from Lemma~\ref{def:tight} is called \emph{tight} (for $C^{\pi_1}$ and $C^{\pi_2}$).

\begin{lemma}
    \label{lem:tight:hyperplanes}
        If $H$ is a tight hyperplane of a cell $C^\pi$, then
     \begin{equation}
         \label{eq:tight}
         H \in \Big\{H_{ij}\ \Big|\  1\leq i < j \leq n,\ \{i,j\} \in \{ \{\pi(k),\pi(k+1)\}\ |\ k = 1,\ldots, n-1\}\Big\}.
     \end{equation}
\end{lemma}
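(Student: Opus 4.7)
The plan is to reduce to the adjacency case by a minimality argument on the ``$\pi$-distance'' of an index pair representing $H$. Specifically, let $H$ be a tight hyperplane for the cell $C^\pi$, and let $C^{\pi'}$ be the corresponding neighbor. By Lemma~\ref{def:tight} I may pick points $\beta^1 \in \xint(C^\pi)$ and $\beta^2 \in \xint(C^{\pi'})$ such that the segment $\beta^1\beta^2$ crosses exactly one hyperplane of $\H \setminus \mathcal{R}$, namely (the unique representative of) $H$. Up to a small generic perturbation of $\beta^1, \beta^2$, I may also assume this segment meets $\arr$ only transversally and avoids all faces of dimension $\le p-2$; every pair $(i,j)$ such that $r_i^\beta - r_j^\beta$ changes sign along the segment then satisfies $H_{ij} = H$ as sets.

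Among all pairs $1 \le i < j \le n$ with $H_{ij} = H$, choose one that minimizes the quantity $b - a$, where $i = \pi(a)$ and $j = \pi(b)$ (and WLOG $a < b$). I claim this minimum is $1$, which is exactly what the lemma asserts. Suppose for contradiction that $b - a \geq 2$, and pick any $k$ with $a < k < b$; set $m = \pi(k)$. On $\xint(C^\pi)$ we have the strict ordering $r_i^{\beta^1} < r_m^{\beta^1} < r_j^{\beta^1}$, while on $\xint(C^{\pi'})$ the sign of $r_i^\beta - r_j^\beta$ is reversed, so $r_j^{\beta^2} < r_i^{\beta^2}$. Three positions are possible for $r_m^{\beta^2}$: either $r_m^{\beta^2} < r_j^{\beta^2}$, or $r_j^{\beta^2} < r_m^{\beta^2} < r_i^{\beta^2}$, or $r_i^{\beta^2} < r_m^{\beta^2}$.

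In the first case $r_m^\beta - r_j^\beta$ changes sign along the segment; in the third case $r_i^\beta - r_m^\beta$ does; in the middle case both change sign. Thus in every case at least one of $H_{im}$ or $H_{mj}$ is crossed by the segment. Since the segment crosses only the single hyperplane $H$ (as a set), the crossed hyperplane must equal $H$, i.e., $H_{im} = H$ or $H_{mj} = H$. Either conclusion yields a new pair representing $H$ whose $\pi$-distance is strictly smaller than $b-a$ (namely $k - a$ or $b - k$, both positive and $< b - a$). This contradicts the minimality of $b-a$, so $b - a = 1$ and $\{i,j\} = \{\pi(a), \pi(a+1)\}$, proving the inclusion~\eqref{eq:tight}.

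The main obstacle I anticipate is the careful handling of redundant hyperplanes: the pair $(i,j)$ we start with need not itself give consecutive indices in $\pi$, and we exploit precisely the freedom to relabel $H$ by any equivalent pair $(p,q) \in \mathcal{R}$-equivalence class. The minimality/contradiction argument above handles this cleanly, but one must be slightly careful to argue that the generic perturbation of $\beta^1, \beta^2$ does not destroy the property of being witnesses for tightness; this is routine because $\xint(C^\pi)$ and $\xint(C^{\pi'})$ are open and the set of segments meeting only $H$ transversally is open and dense among segments between these interiors.
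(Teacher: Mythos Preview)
Your approach is essentially the same as the paper's: pick an intermediate index $m$ lying strictly between $i$ and $j$ in the $\pi$-ordering and use the intermediate value theorem to show that one of $H_{im}$, $H_{mj}$ must be crossed by the witness segment, contradicting uniqueness of the crossed hyperplane. The paper argues at the intersection point $\beta''$ of the segment with $H$; you argue at the far endpoint $\beta^2$, but the mechanism is identical.

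One slip: your case labels are swapped. In case~(1), where $r_m^{\beta^2} < r_j^{\beta^2} < r_i^{\beta^2}$, the quantity $r_m - r_j$ is negative at \emph{both} endpoints (it was already $<0$ at $\beta^1$), so it does not change sign; rather, $r_i - r_m$ does (it flips from negative at $\beta^1$ to positive at $\beta^2$). Symmetrically in case~(3) it is $r_m - r_j$ that changes sign, not $r_i - r_m$. Case~(2) is correct as stated. The conclusion that at least one of $H_{im}$ or $H_{mj}$ is crossed survives in every case, so the argument stands once the labels are corrected.

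Your minimality wrapper on the $\pi$-distance $b-a$ is a clean way to absorb redundant hyperplanes: when the crossed $H_{im}$ or $H_{mj}$ coincides with $H$ as a set, you obtain a strictly shorter representation of $H$ rather than an immediate contradiction. The paper's direct contradiction tacitly assumes the newly found hyperplane is distinct from $H_{k\ell}$, which is not guaranteed when $\mathcal{R}\neq\emptyset$; your version is actually more careful on this point.
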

\begin{proof} 
    For contradiction assume that there is a tight hyperplane $H_{k\ell}$ not satisfying \eqref{eq:tight}. Let $C^{{\pi'}}$ be the neighbor cell of $C^\pi$ with the tight hyperplane $H_{k\ell}$. By Lemma~\ref{def:tight}, there have to be points $\beta \in \xint(C^\pi)$ and $\beta' \in \xint(C^{\pi'})$
such that the line segment $\beta$--$\beta'$ intersects $H_{k\ell}$. Let $\beta''$ be the intersection point.

If $H_{k\ell}$ does not satisfy \eqref{eq:tight}, there has to be $j$ such that 
$$r^\beta_k <r^\beta_j < r^\beta_\ell \text{\ \ \ or\ \ \ } r^\beta_k > r^\beta_j > r^\beta_\ell.
$$ 
Without loss of generality we can assume that $k < j < \ell$. We have $r^{\beta''}_k = r^{\beta''}_\ell$. 
Since $r^{\alpha\beta+(1-\alpha)\beta''}_j$ is a linear function in $\alpha$, either hyperplane $H_{kj}$ or hyperplane $H_{j\ell}$ has to be intersected by the line segment $\beta$--$\beta''$. This contradicts the fact that $H_{k\ell}$ is the only hyperplane crossed by the line segment    $\beta$--$\beta'$.
\end{proof}



\myparagraph{Checking tightness of a hyperplane}
Note that a polyhedon $\{x\ | Ax \le b\}$ contains an interior point if and only if there is an $x^*$ satisfying $Ax^* < b$.

Consider a cell $C^{\pi}$ and a hyperplane $H_{ij}$. The test ``\emph{Is $H_{ij}$ a tight hyperplane of~$C^{\pi}$}?'' can be performed using linear programming:  
\begin{lemma}
    \label{lem:cell:test}
    Let $\pi \in \SSn$ and let $H_{ij}$ be a hyperplane from $\H\setminus R$. The linear program
    \begin{equation}
        \label{eq:tightness:test}
    \begin{aligned}
        \max_{\beta\in\RR^p,\ \varepsilon\in\RR}&&\varepsilon\\
        \text{\rm subject to } && r^\beta_{\pi(k)} + \varepsilon &\le r^\beta_{\pi(k+1)},\qquad k = 1,\ldots, n-1;\ \{\pi(k),\pi(k+1)\} \not \in \mathcal{R} \cup \{ \{i,j \} \},\\
                               && r^\beta_i &= r^\beta_j.
    \end{aligned}
\end{equation}
has a positive optimal value if and only if $H_{ij}$ is a tight hyperplane of $C^{\pi}$.
\end{lemma}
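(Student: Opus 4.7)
The plan is to prove both directions of the equivalence; each direction relies on passing between an intersection point of a separating segment and a feasible point of the LP~\eqref{eq:tightness:test}.

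\textbf{($\Leftarrow$) Tightness implies positive optimum.} Assuming $H_{ij}$ is tight for $C^\pi$, I would invoke Lemma~\ref{def:tight} to get $\beta^1 \in \xint(C^\pi)$ and $\beta^2 \in \xint(C^{\pi'})$ such that the segment $\beta^1$--$\beta^2$ crosses only $H_{ij}$. Let $\beta^*$ denote the intersection of this segment with $H_{ij}$. Then $r^{\beta^*}_i = r^{\beta^*}_j$, so the equality constraint of \eqref{eq:tightness:test} is satisfied. For each adjacent pair $\{\pi(k),\pi(k+1)\} \notin \mathcal{R}\cup\{\{i,j\}\}$, the corresponding hyperplane is not crossed by the segment, so the strict order $r^{\cdot}_{\pi(k)} < r^{\cdot}_{\pi(k+1)}$ holding at both endpoints persists to $\beta^*$. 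Taking $\varepsilon^* := \min_k\bigl(r^{\beta^*}_{\pi(k+1)} - r^{\beta^*}_{\pi(k)}\bigr)$ over these pairs yields a feasible solution of \eqref{eq:tightness:test} with strictly positive objective, hence the LP optimum is positive.

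\textbf{($\Rightarrow$) Positive optimum implies tightness.} Suppose $(\beta^*,\varepsilon^*)$ is feasible with $\varepsilon^*>0$. Since $x_i \ne x_j$ (by Assumption~\ref{ass:rank}(a) applied to the pair), I would pick a direction $d \in \RR^p$ transverse to $H_{ij}$, i.e.\ with $(x_i-x_j)\T d \ne 0$. For small $t>0$ set $\beta^\pm := \beta^* \pm t d$. A first-order computation gives $r^{\beta^\pm}_i - r^{\beta^\pm}_j = \mp t(x_i-x_j)\T d$, so $\beta^+$ and $\beta^-$ lie on opposite sides of $H_{ij}$. By continuity, for sufficiently small $t$ all the strict inequalities $r^{\beta^*}_{\pi(k)} + \varepsilon^* \le r^{\beta^*}_{\pi(k+1)}$ (non-redundant and distinct from $\{i,j\}$) remain strict at both points. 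Aligning the sign of $d$ with $\pi$'s ordering of $i$ and $j$ places $\beta^+ \in \xint(C^\pi)$; and $\beta^- \in \xint(C^{\pi'})$ for the permutation $\pi'$ obtained from $\pi$ by the transposition induced across $H_{ij}$. The segment $\beta^+$--$\beta^-$ crosses only $H_{ij}$ from $\H \setminus \mathcal{R}$, so by Lemma~\ref{def:tight}, $H_{ij}$ is tight for $C^\pi$.

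\textbf{Main obstacle.} The subtle part is bookkeeping around the redundant set $\mathcal{R}$: the LP omits the constraints $r^\beta_{\pi(k)} \le r^\beta_{\pi(k+1)}$ whenever $\{\pi(k),\pi(k+1)\} \in \mathcal{R}$, so one must check that $\beta^*$ still respects the correct ordering on these pairs. This follows because a redundant hyperplane $H_{\pi(k),\pi(k+1)}$ coincides as a set with the lex-earlier hyperplane it duplicates, so the ordering of $r_{\pi(k)}$ and $r_{\pi(k+1)}$ is determined (up to a sign read off from the data $(X,y)$) by the constraint on the primary pair, which \emph{is} present in the LP. Combined with Lemma~\ref{lem:tight:hyperplanes}, this guarantees that $\{i,j\}$ coincides, possibly modulo redundancy, with an adjacent pair of $\pi$, so the perturbation in the reverse direction indeed produces a legitimate neighbour cell $C^{\pi'}$ separated from $C^\pi$ exactly by $H_{ij}$.
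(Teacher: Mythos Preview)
Your $(\Leftarrow)$ direction is correct and is actually more explicit than the paper, which handles that implication via the contrapositive (showing $\varepsilon^*\le 0$ forces non-tightness).

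For $(\Rightarrow)$, your route differs from the paper's and has a real gap. The paper does \emph{not} perturb $\beta^*$ in an arbitrary transverse direction; instead it shoots a ray from a fixed interior point $\beta'\in\xint(C^\pi)$ through $\beta^*$. Because $\beta'$ already satisfies \emph{all} $n-1$ adjacent-pair inequalities strictly (redundant ones included), one can argue directly that the first hyperplane the ray meets is $H_{ij}$, at $\lambda=1$. Extending to the next intersection $\beta''$ and setting $\beta^0=\tfrac12(\beta^*+\beta'')$ then gives an interior point of the neighbouring cell --- and this $\beta^0$ is precisely \eqref{eq:beta:new}, which the algorithm needs. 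The paper finishes with the two remaining cases $\varepsilon^*<0$ (then $C^\pi\cap H_{ij}=\emptyset$) and $\varepsilon^*=0$ (another adjacent-pair constraint becomes active, so $H_{ij}$ is not the unique separator).

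The gap in your argument is the claim that ``the constraint on the primary pair is present in the LP''. The LP only carries constraints for \emph{adjacent} pairs of $\pi$, and the primary (lex-earlier) representative of a redundant adjacent pair need not itself be adjacent. For instance, with $n=4$, $\pi=\mathrm{id}$, and data chosen so that $H_{23}=H_{14}$, the pair $\{2,3\}$ is redundant with primary $\{1,4\}$, which is not adjacent; thus no LP constraint directly controls $r^{\beta^*}_2$ versus $r^{\beta^*}_3$, and you cannot conclude $\beta^+\in\xint(C^\pi)$ from your perturbation alone. Your appeal to Lemma~\ref{lem:tight:hyperplanes} to place $\{i,j\}$ among the adjacent pairs is also circular, since that lemma presupposes tightness of $H_{ij}$. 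The cleanest repair is to adopt the paper's ray from $\beta'$, which makes the redundant bookkeeping disappear; alternatively you would need to prove separately that, whenever $C^\pi$ is full-dimensional, the omitted redundant adjacent-pair inequalities are already implied by the non-redundant ones.
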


\begin{proof}
    Let $\beta' \in \xint(C^\pi)$ be an interior point of $C^\pi$. Let $(\beta^*,\varepsilon^*)$ be an optimal solution of \eqref{eq:tightness:test}. 
    
    If $\varepsilon^* >0$, we have 
    \begin{align}
        r^{\beta'}_{\pi(k)} &< r^{\beta'}_{\pi(k+1)}&&\qquad k = 1,\ldots, n-1;\\
        r^{\beta^*}_{\pi(\ell)} &< r^{\beta^*}_{\pi(\ell+1)} &&\qquad \ell = 1, \ldots, n-1,\ \{\pi(\ell),\pi(\ell+1)\} \not \in \mathcal{R}\cup\{\{i,j\}\};\\
        r^{\beta^*}_{i} &= r^{\beta^*}_{j}. 
    \end{align}
    Consider the ray $\varrho \coloneqq \{\beta\ |\ \beta = \beta'+\lambda (\beta^*-\beta'),\ \lambda \ge 0\}$ from $\beta'$ towards $\beta^*$. Clearly, the first hyperplane intersected by $\varrho$ is $H_{ij}$. Let $\beta''$ be the next intersection of $\varrho$ and another 
hyperplane. Now set 
    \begin{equation} 
        \beta^0\coloneqq \tfrac{1}{2}(\beta^*+\beta''). 
        \label{eq:beta:new}
    \end{equation}
        By construction, $\beta^0$ is an interior point of some cell, and the line segment $\beta'$--$\beta^0$ crosses exactly one hyperplane, which proves that $H_{ij}$ is a tight hyperplane of $C^{\pi}$.


If $\varepsilon^* < 0$, 
then $C^{\pi} \cap H_{ij} = \emptyset$ and $H_{ij}$ cannot be tight for $C^{\pi}$.

If $\varepsilon^* = 0$, at least one inequality constraint in \eqref{eq:tightness:test} is satisfied as equality.
Thus, every $\beta \in C^\pi \cap H_{ij}$ also satisfies 
$\beta \in H_{\pi(k),\pi(k+1)}$ for some $k$ and the separating hyperplane $H_{ij}$ is not unique.
\end{proof}

Note that the point $\beta^0$ from \eqref{eq:beta:new} is an interior point of a neighbor cell $C^{\pi'}$ of $C^\pi$. 
The permutation $\pi'$ can be obtained by sorting the residuals $r_1^{\beta_0}, \dots, r_n^{\beta_0}$.

\subsection {RSIncEnu algorithm}
RSIncEnu is formalized as Algorithm \ref{alg:rsincenu}. It enumerates cells recursively. A call of RSIncEnu takes two arguments: an interior point $\beta$ of a cell and a list $L$ of hyperplanes identified as tight hyperplanes in previous calls of this branch of recursion. 
Then, hyperplanes \eqref{eq:tight} are tested for tightness using Lemma~\ref{lem:cell:test}. For every tight hyperplane $H$ found:  
\begin{itemize}\item[(i)] we get an interior point $\beta^0$ of a neighbor cell by \eqref{eq:beta:new},
\item[(ii)] add the hyperplane $H$ to the list $L$ of hyperplanes that were identified as tight hyperplanes so far in the current branch of recursion, and
\item[(iii)] we call recursively RSIncEnu$(\beta_0, L)$.
    \end{itemize}

\begin{remark}
As a starting point, $\beta$ can be chosen arbitrarily. If it lies in a hyperplane, it can be easily perturbed to obtain an interior point of a cell.
\end{remark}

\begin{remark}
If we define the graph $G = (V,E)$ with $V = \xC$ and $E = \{ \{C^1,C^2\}\ | \ C^1 \text{ and } C^2$  are 
$\text{neighbor cells}\}$, then the recursive calls of RSIncEnu form a spanning tree of $G$.
\end{remark}

Theorem \ref{the:correctness:and:complexity} proves correctness of Algorithm~\ref{alg:rsincenu}. In particular, we  prove that \emph{each cell is visited exactly once}. 

\begin{algorithm}[ht]
    \begin{algorithmic}[1]
        \Require $X \in \QQ^{n \times p},\ y \in \QQ^n$ and $\beta \in \xint(C^\pi)$ for some $\pi \in \SSn$ 
        \State construct the set $\mathcal{R}$ using \eqref{eq:R}\label{alg:rsincenu:preparation}
        \State \Call{RSIncEnu}{$\beta,\emptyset$}\label{alg:rsincenu:first:call}
        \Statex
        \Function{RSIncEnu}{$\beta; L$}
        \State Compute $\pi$ such that $r^\beta_{\pi(1)} < \cdots < r^\beta_{\pi(n)}$\label{alg:rsincenu:compute:pi}
        \State Output $C^\pi$\label{alg:rsincenu:output}
        \For {$i = 1,\ldots,n-1$}\label{alg:rsincenu:for:start}
        \If {$ \{\pi(i), \pi(i+1)\} \not \in \mathcal{R} \cup L$}\label{alg:rsincenu:hyperplane:check}
        \State $k = \min\{\pi(i),\pi(i+1)\}$;\quad $\ell = \max\{\pi(i),\pi(i+1)\}$\label{alg:rsincenu:swap:indices}
        \State Solve \eqref{eq:tightness:test} for $C^\pi$ and $H_{k\ell}$; let $\beta^*, \varepsilon^*$ be a maximizer \label{alg:rsincenu:tighness:test}
        \If {$\varepsilon^* > 0$} \label{alg:rsincenu:tight:found}
        \State Compute $\beta^0$ according to \eqref{eq:beta:new}\label{alg:rsincenu:new:beta}
        \State $L \coloneqq L \cup \{\{k,\ell\}\}$\label{alg:rsincenu:update:L}
        \State \Call{RSIncEnu}{$\beta^0, L$}\label{alg:rsincenu:recursive:call}
        \EndIf
        \EndIf
        \EndFor\label{alg:rsincenu:for:end}
        \EndFunction
    \end{algorithmic}
    \caption{RSIncEnu}
    \label{alg:rsincenu}
\end{algorithm}

\begin{theorem}
    \label{the:correctness:and:complexity}\hfill
    \begin{enumerate}[label=(\alph*)]
        \item\label{enu:the:enumeration:correctness} RSIncEnu outputs all elements of $\xC$. Every element is output exactly once.
    \item\label{enu:the:enumeration:complexity} The total time complexity of RSIncEnu is $O( n\cdot \abs{\xC}\cdot \mathrm{lp}(n,p) + n^2\log n p)$ and the space complexity is $O(\mathrm{lp}(n,p)+n^2 p)$, where $\mathrm{lp}(n,p)$ is the time or space to solve a linear program with $O(n)$ constraints, $O(p)$ variables and with bitsize of data $O(\textsl{bitsize}(X,y))$.
    \end{enumerate}
\end{theorem}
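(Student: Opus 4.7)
For part (a), I plan to separately establish completeness (every cell in $\xC$ is output) and uniqueness (no cell is output twice). Completeness can be argued by induction on the graph-distance in the cell-adjacency graph $G=(\xC,E)$ whose edges join neighbor cells. Since $\RR^p \setminus \bigcup_{1\le i<j\le n} H_{ij}$ is path-connected through facets, $G$ is connected. At each visited cell $C^\pi$, Lemma~\ref{lem:tight:hyperplanes} bounds the candidate tight hyperplanes to those arising from the $n-1$ adjacent-transposition pairs $\{\pi(k),\pi(k+1)\}$, and Lemma~\ref{lem:cell:test} lets us test each candidate via one LP of the form \eqref{eq:tightness:test}. The subtle point for completeness is to show that skipping a hyperplane because it already lies in $L$ does not cause any cell to be missed: whenever $H\in L$, the cell across $H$ must already have been reached through an earlier branch of the recursion.

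For uniqueness, I would treat $L$ as a global, monotonically-growing set (it is threaded through recursive calls and updated only by union at line~\ref{alg:rsincenu:update:L}), so each hyperplane from $\H\setminus\mathcal{R}$ can serve as a transition at most once during the whole execution. Combined with Lemma~\ref{lem:tight:hyperplanes}---which restricts neighbor transitions to adjacent transpositions, so that \arr{} inherits the local combinatorial structure of the braid arrangement---this should force the recursion to realise a spanning tree of $G$. I anticipate that the main obstacle will be formalising this last step, namely to rule out that two recursive branches both arrive at the same cell~$C^\pi$. The argument should use the general property of hyperplane arrangements that every closed walk in $G$ crosses each hyperplane from $\H\setminus\mathcal{R}$ an even number of times (each crossing flips the sidedness of exactly one hyperplane, and all sidednesses must be restored to return to the starting cell). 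Once the first use of a hyperplane is recorded in $L$, the return journey back to an already-visited cell is blocked.

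For part (b), the complexity follows by a step-by-step accounting of Algorithm~\ref{alg:rsincenu}. The preprocessing in line~\ref{alg:rsincenu:preparation} computes $\mathcal{R}$ by lexicographically sorting the $\binom{n}{2}$ hyperplanes, after normalising their $O(p)$-sized coefficient vectors, which costs $O(n^2 \log n\cdot p)$ and yields the second term of the time bound. For each of the $|\xC|$ enumerated cells, the algorithm (i) sorts residuals to obtain $\pi$ in $O(n\log n)$ at line~\ref{alg:rsincenu:compute:pi}, (ii) iterates at most $n-1$ times through the for loop, and (iii) in each iteration solves one LP of the form \eqref{eq:tightness:test} with $O(n)$ constraints and $O(p)$ variables in time $\mathrm{lp}(n,p)$, totalling $O(n\cdot |\xC|\cdot \mathrm{lp}(n,p))$ across all cells. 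For the space bound, we store the arrangement's coefficients in $O(n^2 p)$, the current list $L$ in $O(n^2)$, and the LP workspace in $\mathrm{lp}(n,p)$; the recursion stack has depth at most $|L|=O(n^2)$ since $L$ grows by at least one per recursive descent, and each stack frame stores only $O(n+p)$ additional data for $\pi$ and $\beta$, so the total fits within $O(\mathrm{lp}(n,p)+n^2 p)$.
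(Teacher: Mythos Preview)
Your uniqueness argument for part~(a) and your accounting for part~(b) are sound and essentially parallel the paper's reasoning. The parity-of-crossings argument for uniqueness is in fact a clean reformulation of what the paper establishes via its ``reachable from exactly one call'' invariant.

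The real gap is in your completeness argument. You correctly isolate the difficulty---that a tight hyperplane may be skipped at line~\ref{alg:rsincenu:hyperplane:check} because it already lies in $L$---but your proposed resolution, that ``the cell across $H$ must already have been reached through an earlier branch,'' does not follow from an induction on graph distance. The hyperplane $H$ may have entered $L$ at a transition between two cells $A$ and $B$ that are far from the current cell $C'$; there is no a~priori reason the subtree rooted at $B$ must have visited the specific neighbour $D'$ of $C'$ across $H$. Graph distance gives you no handle on \emph{when} cells are visited relative to when hyperplanes enter $L$, so the inductive hypothesis does not deliver the claim.

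The paper's argument is organised around a different invariant. A cell $C^\pi$ with interior point $\beta$ is called \emph{reachable} from the call with interior point $\beta'$ if $\beta$ and $\beta'$ lie on the same side of every hyperplane currently in $L$. The decisive geometric observation is this: for a reachable $C^\pi$, the straight segment $\beta'$--$\beta$ cannot cross any hyperplane in $L$, so the \emph{first} hyperplane it meets is not in $L$ and therefore survives the test on line~\ref{alg:rsincenu:hyperplane:check}; the resulting recursive call produces a point separated from $\beta$ by one fewer hyperplane. Iterating, every reachable cell is eventually reached. This line-segment step is the ingredient your plan lacks; once you have it, the induction should be on the number of hyperplanes separating $\beta'$ from the target, not on graph distance.
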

\begin{proof} \emph{Part \ref{enu:the:enumeration:correctness}.}
Assume we are in the course of an enumeration process. Consider a RSIncEnu call with $\beta'$ is being evaluated and let $L'$ be the current value $L$ (i.e. the current list of hyperplanes that were considered tight so far). Let $C^\pi$ be a cell which has not been enumerated so far and let $\beta \in \xint(C^\pi)$. We say that a cell $C^\pi$ is \emph{reachable} from the current RSIncEnu call if for every $(i,j)$ such that $i<j$ and $\{i,j\} \in L'$ we have $\beta \in H^-_{ij} \Leftrightarrow \beta' \in H^-_{ij}$; recall that $H^-_{ij}$ is one of the halfspaces induced by $H_{ij}$. I.e.,~$\beta$ and $\beta'$ are on the same side of every tight hyperplane found so far.

    Note that during the enumeration process, each cell, say $C^\pi$ with an interior point $\beta$, is always reachable from exactly one RSIncEnu call. At the very first RSIncEnu call, $L$ is empty, so $C^\pi$ is reachable. Then, if a hyperplane, say $H_{ij}$ is added to $L$ on Line \ref{alg:rsincenu:update:L} in some RSIncEnu($\beta', L)$ call, $C^\pi$ is reachable either from the new RSIncEnu call on Line \ref{alg:rsincenu:recursive:call} (if $\beta'$ is on the different side of $H_{ij}$ than $\beta$), or still from the current RSIncEnu call (if $\beta'$ is on the same side of $H_{ij}$).

    It remains to show that every reachable cell is really ``reached'' during the enumeration. Assume that RSIncEnu($\beta', L'$) is being evaluated and let $C^{\pi'}$ be the cell output in this RSIncEnu call. Consider a reachable cell $C^\pi$ with an interior point $\beta$. Consider the line segment $\beta'$--$\beta$. Take the first hyperplane it intersects (the case of tie can be resolved e.g. by perturbation of $\beta$ or by taking one of the firstly intersected hyperplanes that contains a facet of $C^{\pi'}$). This hyperplane is a tight hyperplane of $C^{\pi'}$, which will result to new RSIncEnu call for a $\beta^0$ that is separated from $\beta$ by one hyperplane fewer. Since the total number of hyperplanes is $\binom{n}{2}$, the maximal depth of recursion is also $\binom{n}{2}$.

    \emph{Part \ref{enu:the:enumeration:complexity}: Time complexity.}
    The preprocessing step on Line \ref{alg:rsincenu:preparation} can be performed by normalizing $p$ coefficents of $O(n^2)$ hyperplanes and sorting them. This takes $p\cdot O(n^2) \cdot \log(O(n^2)) = O(p \cdot n^2\cdot \log n)$ time. 
    Then RSIncEnu is called for every cell exactly once. Inside a call, residuals are sorted (Line \ref{alg:rsincenu:compute:pi}; takes time $O(p\cdot n\cdot\log n)$). Then, there are $n-1$ iterations of for-cycle \ref{alg:rsincenu:for:start}--\ref{alg:rsincenu:for:end}. In each iteration, at most one linear programs with at most $n-1$ constraints and $p+1$ variables is solved (Line \ref{alg:rsincenu:tighness:test}). This amounts to the complexity $O(n\cdot\abs{\xC}\cdot \mathrm{lp}(n,p))$. If the test on Line \ref{alg:rsincenu:tighness:test} passes, an interior point $\beta^0$ of a new cell has to be found (Line \ref{alg:rsincenu:new:beta}). This requires to perform ray-shooting: to find the first hyperplane (among $O(n^2)$ hyperplanes) that is intersected by a ray. Hence, the complexity is $O(n^2 p)$. Note, however, that the ray-shooting is performed only once per cell and thus this factor is dominated by the complexity $O(n\cdot\mathrm{lp}(n,p))$ of solving a LP for the cell.

    \emph{Part \ref{enu:the:enumeration:complexity}: Space complexity.}
    At most one linear program is needed at a moment. This gives the factor $\mathrm{lp}(n,p)$. Aside of this, the data are only copied and manipulated. There are $O(n^2)$ hyperplanes with $O(p)$ coefficients, which amounts to $O(n^2 p)$ in total.
 
\end{proof}

\begin{remark}
    Time complexity of FlIncEnu \cite{rada:2018:NewAlgorithmEnumeration} is $O(n^2 \cdot \abs{\xC}\cdot \mathrm{lp}(n^2,p))$. The benefit of RSIncEnu is in the lower number of linear programs to be solved and also in the fact that the linear programs are smaller.

    Actually, linear programs in RSIncEnu can be made even smaller using the idea of the variant of ReverseSearch algorithm \cite{sleumer:1998:Outputsensitivecellenumeration} and $\ell$-IE algorithm \cite{rada:2018:NewAlgorithmEnumeration}. However, the modification 
would not lead to an improvement of the asymptotic complexity bound.
\end{remark}

\section{The continuous and convex case ({$\textsf{CCC})$}}
\label{sect:CCC}

\subsection{Assumption implying continuity of the $F$-function}

\begin{assumption}\label{ass:cont}
Let $\alpha_1, \alpha_2, \dots, \alpha_n \in \mathbb{R}$ be given.
Let $\beta \in \bigcup_{\pi\in\xC} \xint(C^\pi)$ and let $\pi$ be the permutation such that
$
r^\beta_{\pi(1)} 
<
r^\beta_{\pi(2)} 
< \cdots
<
r^\beta_{\pi(n)}.
$
We assume that 
$$
a^{\beta}_i = \alpha_{\pi^{-1}(i)} \text{\ \ \ for all\ \ \ } i = 1, \dots, n.
$$
\end{assumption}
Now we can write
$$
F(\beta) = \sum_{i=1}^n \alpha_{\pi^{-1}(i)} r_i^{\beta} 
= \sum_{i=1}^n \alpha_{i} r_{\pi(i)}^{\beta}.
$$

Assumption \ref{ass:cont} implies that whenever 
we change $\beta$ to $\beta'$ in a way that 
their corresponding permutations $\pi, \pi'$ differ only in a transposition interchanging $\pi(k)$ and $\pi(\ell)$. Then coefficients $a_k$ and $a_{\ell}$ are exchanged. The remaining coefficients are the same.
In other words, when we change $\beta$ to $\beta'$ in a way that
a pair of residuals exchange their ranks, 
their $a$-coefficients are switched as well. 
Geometrically, we cross from a cell $C^{\pi}$ to a neighbor
cell $C^{\pi'}$. Now consider the line segment
$\{\gamma\beta + (1-\gamma)\beta':\ 0 \leq \gamma \leq 1\}$.
For some $\gamma_0 \in (0,1)$
we have $\beta_0 \coloneqq\gamma_0\beta + (1-\gamma_0)\beta'$ such that
$r_{\pi(k)}^{\beta_0} 
= r_{\pi(\ell)}^{\beta_0}$.
Since the residuals are equal, we also have
$$
\alpha_{k} r_{\pi(k)}^{\beta_0} 
+ \alpha_{\ell} r_{\pi(\ell)}^{\beta_0} 
=
\alpha_{\ell} r_{\pi(k)}^{\beta_0} 
+ \alpha_{k} r_{\pi(\ell)}^{\beta_0}.
$$ 
The other terms in $\sum_{i=1}^n \alpha_{i}r_{\pi(i)}^{\beta_0} = 
F(\beta_0)$ are unchanged.
It follows that
$\lim_{\gamma \nearrow \gamma_0} 
F(\gamma\beta + (1-\gamma)\beta')
= 
\lim_{\gamma \searrow \gamma_0} 
F(\gamma\beta + (1-\gamma)\beta')$. This (informal) argument shows, together with Assumption~\ref{as:reg}, that:

\begin{lemma} Under Assumptions \ref{ass:rank}, \ref{as:reg} and 
\ref{ass:cont}, the function $F$ is continuous.
\end{lemma}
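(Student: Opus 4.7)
The plan is to interpret $F$ through a family of globally affine candidate functions indexed by permutations, and then show that these candidates agree on every intersection of (closed) cells. For each $\pi\in\SSn$ define
$G_\pi(\beta) := \sum_{i=1}^{n}\alpha_i(y_{\pi(i)}-x_{\pi(i)}\T\beta)$,
which is affine, hence continuous on all of $\RR^p$. By Assumption~\ref{ass:cont}, $F$ coincides with $G_\pi$ on $\xint(C^\pi)$.

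The heart of the proof is a compatibility lemma: if $\beta_0\in C^\pi\cap C^{\pi'}$, then $G_\pi(\beta_0)=G_{\pi'}(\beta_0)$. Both $\pi$ and $\pi'$ weakly sort the residual vector $r^{\beta_0}$, so they differ only by permutations of positions whose residuals at $\beta_0$ are tied. I would decompose the difference $\pi^{-1}\pi'$ into transpositions of such tied positions $k,\ell$ and use
$\alpha_k r^{\beta_0}_{\pi(k)} + \alpha_\ell r^{\beta_0}_{\pi(\ell)} = \alpha_\ell r^{\beta_0}_{\pi(k)} + \alpha_k r^{\beta_0}_{\pi(\ell)}$
(valid because $r^{\beta_0}_{\pi(k)} = r^{\beta_0}_{\pi(\ell)}$) to show that each transposition preserves $G$. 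Consequently, the rule $\widetilde F(\beta):=G_\pi(\beta)$ for any $\pi$ with $\beta\in C^\pi$ defines a single-valued function on $\RR^p$ that agrees with $F$ on $\bigcup_\pi\xint(C^\pi)$.

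Continuity of $\widetilde F$ will follow from a finite-pigeonhole argument. Given $\beta_n\to\beta_0$, pick $\pi_n$ with $\beta_n\in C^{\pi_n}$. Since $\SSn$ is finite, some $\pi^\star$ occurs infinitely often; closedness of $C^{\pi^\star}$ forces $\beta_0\in C^{\pi^\star}$, so along that subsequence $\widetilde F(\beta_n)=G_{\pi^\star}(\beta_n)\to G_{\pi^\star}(\beta_0)=\widetilde F(\beta_0)$. As every subsequence admits a further subsequence converging to the same limit, $\widetilde F(\beta_n)\to\widetilde F(\beta_0)$.

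It remains to identify $F$ with $\widetilde F$ on boundaries of cells. Approaching any $\beta_0$ from the interior of some cell $C^\pi$ and combining lower semicontinuity (Assumption~\ref{as:reg}) with the just-established continuity of $\widetilde F$ gives $F(\beta_0)\leq\widetilde F(\beta_0)$; the reverse inequality holds because the expression $\sum_i a_i^{\beta_0}r_i^{\beta_0}$ evaluates to $\widetilde F(\beta_0)$ for any assignment of $(a_i^{\beta_0})$ compatible with the weak ranking at $\beta_0$, by the same tie-symmetry identity. The only real obstacle is the compatibility lemma; once it is in place, the rest is bookkeeping around finiteness of $\SSn$ and closedness of the cells $C^\pi$.
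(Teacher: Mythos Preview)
Your argument follows the same idea as the paper's---the tie-symmetry identity $\alpha_k r_{\pi(k)}^{\beta_0}+\alpha_\ell r_{\pi(\ell)}^{\beta_0}=\alpha_\ell r_{\pi(k)}^{\beta_0}+\alpha_k r_{\pi(\ell)}^{\beta_0}$ when $r_{\pi(k)}^{\beta_0}=r_{\pi(\ell)}^{\beta_0}$---but you carry it further: the paper only sketches a single facet crossing along a line segment and explicitly labels the argument ``informal'', whereas you handle arbitrary overlaps $C^\pi\cap C^{\pi'}$ and give a clean pigeonhole proof that $\widetilde F$ is continuous. (Incidentally, the compatibility lemma is even easier than you indicate: if $\beta_0\in C^\pi\cap C^{\pi'}$ then both $\pi$ and $\pi'$ sort the same residual vector, so $r^{\beta_0}_{\pi(i)}=r^{\beta_0}_{\pi'(i)}$ for every $i$, and $G_\pi(\beta_0)=G_{\pi'}(\beta_0)$ follows immediately without decomposing into transpositions.)

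The one soft spot is your reverse inequality $F(\beta_0)\geq\widetilde F(\beta_0)$ on lower-dimensional faces. You argue that $\sum_i a_i^{\beta_0}r_i^{\beta_0}=\widetilde F(\beta_0)$ ``for any assignment of $(a_i^{\beta_0})$ compatible with the weak ranking'', but Assumption~\ref{ass:cont} (like Assumption~\ref{ass:rank}(b)) constrains the $a$-coefficients only for $\beta$ in the \emph{open} cells; nothing in the literal hypotheses forces $(a_i^{\beta_0})$ on a face to be a permutation of $(\alpha_1,\dots,\alpha_n)$ at all, and lower semicontinuity by itself yields only $F(\beta_0)\leq\widetilde F(\beta_0)$. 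The paper's informal argument has precisely the same gap. The intended reading---visible in the remark following Assumption~\ref{as:reg}---is that on faces $F$ inherits its value from one of the adjacent cells, and under that convention your tie-symmetry identity does close the loop. It would strengthen your write-up to state this convention explicitly rather than bury it in the phrase ``compatible with the weak ranking''.
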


\subsection{Monotonicity of rank coefficients and convexity of the $F$-function}

\begin{assumption}\label{ass:convex}
$\alpha_1 \leq \alpha_2 \leq \cdots \leq \alpha_n$.
\end{assumption}

Recall that in Section~\ref{sect:statistika} we discussed the typical choice of $\alpha_1, \dots, \alpha_n$ based on score functions as 
it is usual in statistical literature. This choice satisfies Assumption~\ref{ass:convex}.

\begin{lemma} Under Assumptions \ref{ass:rank}, \ref{as:reg},
\ref{ass:cont} and \ref{ass:convex}, the function $F$ is convex.
\label{lem:mx}
\end{lemma}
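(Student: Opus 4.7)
\medskip

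\noindent\textbf{Proof plan.} My plan is to exhibit $F$ as a pointwise maximum of finitely many affine functions of $\beta$, which is automatically convex. The analytic input is the rearrangement inequality: if $\alpha_1\le\cdots\le\alpha_n$ and $u_1,\dots,u_n\in\mathbb{R}$ are arbitrary reals, then for every permutation $\sigma\in\SSn$,
\[
\sum_{k=1}^n \alpha_k u_{\sigma(k)}\ \le\ \sum_{k=1}^n \alpha_k u_{\tau(k)},
\]
where $\tau$ is any permutation that orders the $u$'s increasingly. I would apply this with $u_i = r^\beta_i$: for $\beta\in\bigcup_{\pi\in\xC}\xint(C^\pi)$, the residuals are strictly increasingly ordered by the associated $\pi$, and Assumption~\ref{ass:cont} gives $F(\beta)=\sum_{k=1}^n \alpha_k r^\beta_{\pi(k)}$. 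Rearrangement then rewrites this as
\[
F(\beta)\ =\ \max_{\sigma\in\SSn}\sum_{k=1}^n \alpha_k r^\beta_{\sigma(k)}
\ =\ \max_{\sigma\in\SSn}\sum_{k=1}^n \alpha_k\bigl(y_{\sigma(k)}-x_{\sigma(k)}\T\beta\bigr).
\]

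Next I would define $G(\beta)\coloneqq\max_{\sigma\in\SSn}\sum_{k=1}^n \alpha_k (y_{\sigma(k)}-x_{\sigma(k)}\T\beta)$ on all of $\mathbb{R}^p$. Each term inside the max is affine in $\beta$, so $G$ is a pointwise maximum of finitely many affine functions and therefore convex (in particular continuous). By the previous lemma $F$ is continuous on $\mathbb{R}^p$, and the identity $F=G$ has just been established on the open dense set $\bigcup_{\pi\in\xC}\xint(C^\pi)$. Two continuous functions agreeing on a dense set are equal everywhere, hence $F\equiv G$ on $\mathbb{R}^p$, and convexity of $F$ follows.

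The only point requiring any care is the extension step: rearrangement gives the max-representation only on the union of cell interiors, because elsewhere two or more residuals may coincide and the sorting permutation is not unique. This is precisely what the continuity lemma handles, so the step is not actually an obstacle. (An alternative route, which I would keep in reserve if one preferred to avoid rearrangement, is Abel summation: writing $S_k^\beta\coloneqq r^\beta_{\pi(1)}+\cdots+r^\beta_{\pi(k)}$ for the sum of the $k$ smallest residuals, one gets $F(\beta)=\alpha_n\sum_i r^\beta_i - \sum_{k=1}^{n-1}(\alpha_{k+1}-\alpha_k)S_k^\beta$; each $S_k^\beta$ is concave in $\beta$ as a sum of the $k$ smallest of affine functions, and the coefficients $\alpha_{k+1}-\alpha_k$ are $\ge 0$ by Assumption~\ref{ass:convex}, so $F$ is a linear function plus a sum of convex functions.)
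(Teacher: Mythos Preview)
Your proposal is correct and follows essentially the same route as the paper: both show that $F(\beta)=\max_{\sigma\in\SSn}\sum_k\alpha_k r^\beta_{\sigma(k)}$, a finite maximum of affine functions, hence convex. The only presentational differences are that the paper proves the rearrangement inequality from scratch via adjacent transpositions rather than citing it, and the paper's Claim is stated directly for any $\beta\in\mathbb{R}^p$ with a non-strict sorting permutation $\pi_0$, so the max-representation holds everywhere without an explicit density/continuity extension step (your extension argument is a perfectly valid alternative way to handle ties). Your Abel-summation backup is a genuinely different and equally clean argument not appearing in the paper.
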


\begin{proof}
 We will prove the following Claim.
\emph{Let $\beta\in\mathbb{R}^p$ and let $\pi_0$ be any permutation satisfying
$r^\beta_{\pi_0(1)} 
\leq
r^\beta_{\pi_0(2)} 
\leq \cdots
\leq
r^\beta_{\pi_0(n)}.$ Let $\pi \in \mathcal{S}_n$. 
Then} 
\begin{equation}
F(\beta) = 
\sum_{i=1}^n \alpha_{i}r_{\pi_0(i)}^\beta
\geq
\sum_{i=1}^n \alpha_{i}r_{\pi(i)}^\beta.
\label{eq:qwe}
\end{equation}

The claim implies that $F(\beta)$ can be written in the form
\begin{equation}
F(\beta) = 
\max_{\pi\in\mathcal{S}_n} \sum_{i=1}^n 
\alpha_{i}r_{\pi(i)}^{\beta}
=
\max_{\pi\in\mathcal{S}_n} \sum_{i=1}^n 
\alpha_{i}(y_{\pi(i)} - x\T_{\pi(i)}\beta). \label{eq:maxform}
\end{equation}
It means that $F(\beta)$ is a finite maximum of linear functions, which is a convex function.

\emph{Proof of Claim.} Let
$k < \ell$, 
let a permutation $\pi$ satisfy $r_{\pi(k)}^\beta > r_{\pi(\ell)}^\beta$ and let $\pi'$ differ from $\pi$ 
only in a transposition interchanging $\pi(k)$ and $\pi(\ell)$. We show
that 
\begin{equation}
\sum_{i=1}^n \alpha_{i}r_{\pi'(i)}^{\beta} \geq \sum_{i=1}^n \alpha_{i}r_{\pi(i)}^{\beta}.
\label{eq:roste}
\end{equation}
Since the permutation $\pi_0$ can be obtained from any permutation $\pi$ by a finite number of transpositions, the claim follows.

To prove (\ref{eq:roste}), observe that $\alpha_\ell - \alpha_k \geq 0$ and
$
r_{\pi'(\ell)}^\beta
=
r_{\pi(k)}^\beta 
> 
r_{\pi(\ell)}^\beta
=
r_{\pi'(k)}^\beta.
$
Now
\begin{align*}
\sum_{i=1}^n \alpha_i r_{\pi'(i)}^\beta
&=
\sum_{i \neq k,\ell} \alpha_i r_{\pi'(i)}^\beta
+ \alpha_{k} r_{\pi'(k)}^\beta
+ \alpha_{\ell} r_{\pi'(\ell)}^\beta
=
\sum_{i \neq k,\ell} \alpha_i r_{\pi(i)}^\beta
+ \alpha_{k} r_{\pi'(k)}^\beta
+ \alpha_{\ell} r_{\pi'(\ell)}^\beta
\\ 
&=
\sum_{i \neq k,\ell} \alpha_i r_{\pi(i)}^\beta
+ \alpha_{k} r_{\pi(\ell)}^\beta
+ \alpha_{\ell} r_{\pi(k)}^\beta
\\&
\geq
\sum_{i \neq k,\ell} \alpha_i r_{\pi(i)}^\beta
+ \alpha_{k} r_{\pi(\ell)}^\beta
+ \alpha_{\ell} r_{\pi(k)}^\beta
- \underbrace{(\alpha_\ell - \alpha_k)}_{\geq 0}\underbrace{(r_{\pi(k)}^\beta - r_{\pi(\ell)}^\beta)}_{> 0}
\\&
=
\sum_{i \neq k,\ell} \alpha_i r_{\pi(i)}^\beta
+ \alpha_{k} r_{\pi(k)}^\beta
+ \alpha_{\ell} r_{\pi(\ell)}^\beta
= 
\sum_{i=1}^n \alpha_i r_{\pi(i)}^\beta.
\end{align*}
\end{proof}

To summarize: $F$ is continuous, convex and cell-wise linear on
the system of cells of the arrangement~\arr of hyperplanes (\ref{eq:hyperplanes}).

We conclude this section by a formal definition of the class 
\textsf{CCC}.

\begin{definition} \textsf{CCC} is the class of $F$-functions from
\eqref{eq:F} satisfying Assumptions~\ref{ass:rank}, 
\ref{as:reg}, \ref{ass:cont} and \ref{ass:convex}.\label{def:CCC}
\end{definition}

It is easy to see that the rank estimators known from robust statistics, as described in
Section~\ref{sect:statistika}, belong to \textsf{CCC}.

\subsection{Representation of faces of an arrangement and an example}
Recall that $F$ is a cell-wise linear function on $\xC$, the system of cells $C^{\pi}$ of the arrangement~\arr.
Recall also that non-empty intersections of cells are called \emph{faces} (of the arrangement).

A vertex of the arrangement can be described as an intersection of $p$ independent hyperplanes from \eqref{eq:hyperplanes}, i.e.~as a particular linear system with a unique solution. 
Analogously, if $\Phi$ is a face of dimension $d$, the affine hull of $\Phi$ can be described as an intersection of $p-d$ independent hyperplanes from \eqref{eq:hyperplanes}.

When we are given a family of faces, its \emph{minimal face} is (any) face from the family with the lowest dimension.

\begin{example}
 Figure~2 shows an example of $F(\beta) \in\textsf{CCC}$
with $\beta \in \mathbb{R}^2$, $n = 5$ and 
$\alpha = (-2, -1, 0, 1, 2)\T$. Here, a minimizer exists but is not unique: all points of the cell $C^{\pi}$ are minimizers. However, there are only four \emph{minimal faces} where the minimum is attained --- these are the four vertices of $C^{\pi}$.

\begin{figure}[t]
\centering
\includegraphics[width=10cm]{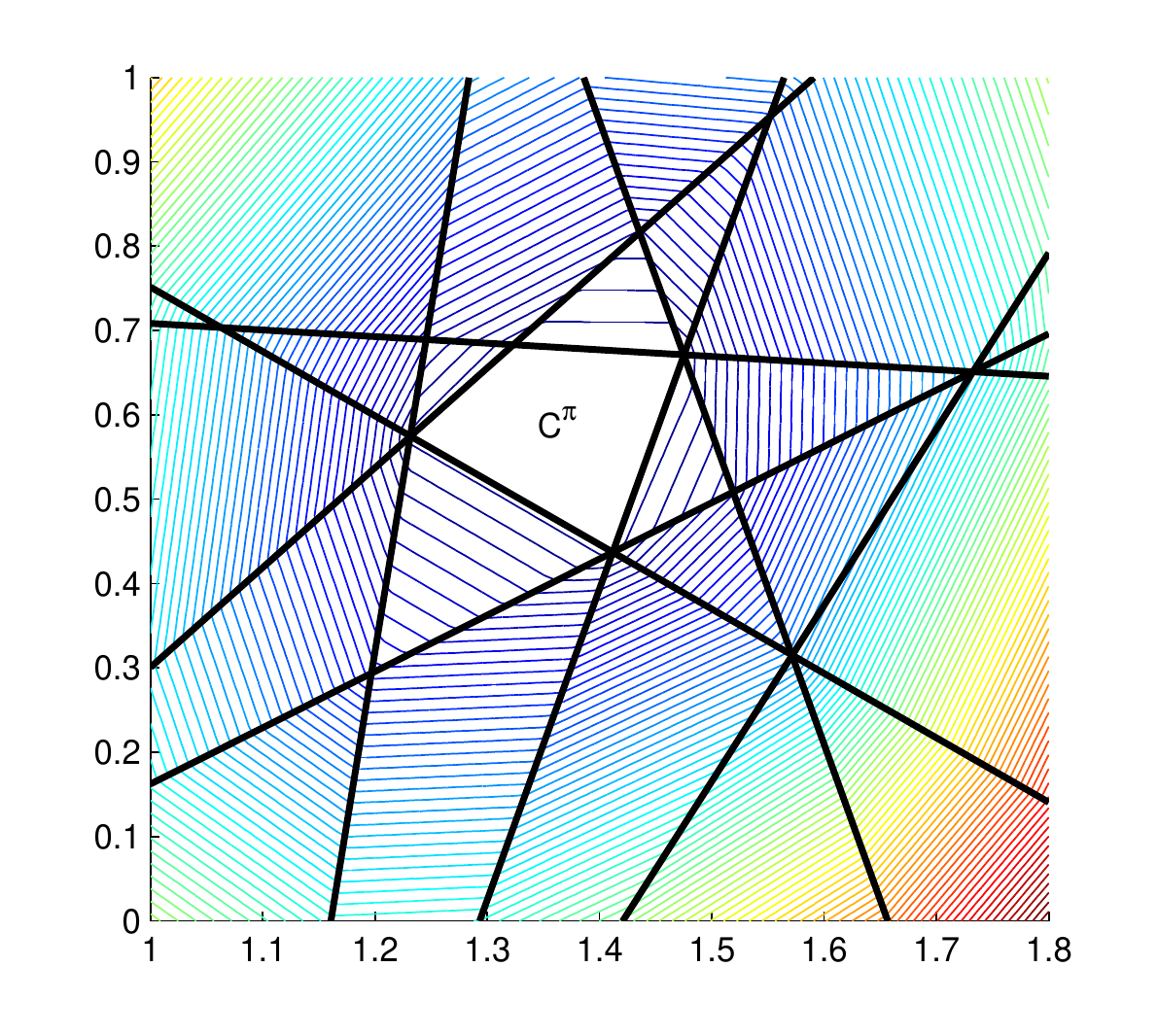}
\caption{An example of the arrangement \arr{} in $\mathbb{R}^2$ with $n = 5$
and contour lines of a sample function in \textsf{CCC}. The minimum of $F(\beta)$ is attained on the cell $C^{\pi}$.}
\end{figure}
\end{example}

\subsection{Bit sizes of rational numbers and some Big-$L$ arguments}\label{sect:BigL}

If $\gamma = \pm \frac{\varrho}{\vartheta}$ is a rational number written down in the coprime form, then $\bitsize(\gamma)$ is the number of bits
needed to write down the numerator $\varrho$ and denominator $\vartheta$ in binary (plus a bit to write down the sign). Asymptotically, $\bitsize(\gamma) = O(\log \varrho + \log \vartheta)$. When $\Gamma \coloneqq (\gamma_1, \dots, \gamma_N)$ is a family of rational numbers, then $\bitsize(\Gamma) = \sum_{i=1}^N\bitsize(\gamma_i)$. We will use this notation quite freely, for example $\bitsize(X,y)$ is the sum of bitsizes of the entries in the (rational) matrix $X$ and the (rational) vector $y$.

Recall that our task is to find a minimizer of a function $F \in \textsf{CCC}$.
The function $F$ is fully described by $(X,y,\alpha)$, where 
$\alpha = (\alpha_1, \dots, \alpha_n)$ are the coefficients from Assumption~\ref{ass:cont}. Thus the size of the input instance is
$$
L \coloneqq \bitsize(X,y,\alpha).
$$
The task is to describe an algorithm the working time of which can be bounded by a polynomial in $L$. 

We will also need some easy technical 
properties which are easily proved along the lines of 
Part~I of \cite{schrijver:2000:TheoryLinearInteger}.

\begin{lemma} \label{lem:q}
There is a polynomial $q$ with the following properties:
\begin{itemize}
\item[(a)]
Each face $\Phi$ of the arrangement  
\arr{} 
has a point $b \in \Phi$ with $\bitsize(b) \leq q(L)$.
In particular, every vertex $v$ of the arrangement has
$\bitsize(v) \leq q(L)$.
\item[(b)] If there exists 
\begin{equation}
t_0 \coloneqq \min_{\beta\in\mathbb{R}^p} F(\beta),
\label{eq:tzero}
\end{equation}
then $\bitsize(t_0) \leq q(L)$.
Moreover, if there exists a minimizer, there also exists a minimizer $\beta_0$ with
$\bitsize(\beta_0) \leq q(L)$.
\end{itemize}
\end{lemma}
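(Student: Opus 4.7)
The plan is to reduce both parts to the standard bitsize bound for rational polyhedra. I would invoke the following well-known fact (see, e.g., Theorem~10.2 in Part~I of \cite{schrijver:2000:TheoryLinearInteger}): if $P = \{\beta \in \mathbb{R}^p : A\beta \le b,\ C\beta = d\}$ is a non-empty rational polyhedron with $\bitsize(A,b,C,d) \le M$, then $P$ contains a rational point $\beta^*$ with $\bitsize(\beta^*) \le q_0(M,p)$ for a fixed polynomial $q_0$. Each hyperplane $H_{ij}$ in \eqref{eq:hyperplanes} is cut out by $(x_i - x_j)^T \beta = y_i - y_j$, whose bitsize is bounded by $2\,\bitsize(X,y) = O(L)$, so any linear system built from hyperplanes in $\H$ has bitsize $O(L)$.

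For part~(a), write $\Phi = C^{\pi^1} \cap \cdots \cap C^{\pi^k}$. Each cell $C^{\pi^s}$ is a rational polyhedron whose defining system comes from $\H$ via \eqref{eq:Cpi}, and the affine hull of $\Phi$ is cut out by those $H_{ij}$ in which $\Phi$ is contained. Thus $\Phi$ is itself a rational polyhedron with a defining system of bitsize $O(L)$, and the cited tool yields a point $b \in \Phi$ with $\bitsize(b) \le q_0(O(L),p)$. Vertices are the special case $\dim \Phi = 0$.

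For part~(b), I would first argue that the minimizing set $M \coloneqq F^{-1}(t_0)$ is a union of faces of $\A(\H)$. By Lemma~\ref{lem:mx}, $F$ is convex, continuous, and linear on each cell $C^\pi$. The restriction $F|_{C^\pi}$ is a linear form on the polyhedron $C^\pi$, so its minimum on $C^\pi$ is attained on a (geometric) face of $C^\pi$, which is itself a face of $\A(\H)$ (being expressible as the intersection of $C^\pi$ with its neighbors across appropriate bounding hyperplanes from $\H$). Continuity forces the cell-wise minima on cells meeting $M$ all to equal $t_0$, so $M$ is a union of such faces. Pick one face $\Phi^* \subseteq M$; by part~(a), $\Phi^*$ contains a rational minimizer $\beta_0$ with $\bitsize(\beta_0) \le q_1(L)$ for a suitable polynomial $q_1$. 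For any cell $C^\pi$ containing $\Phi^*$, the form $F|_{C^\pi}(\beta) = \sum_{i=1}^n \alpha_{\pi^{-1}(i)}(y_i - x_i^T \beta)$ has rational coefficients of bitsize $O(L)$, so standard bitsize estimates for rational inner products and reduction to coprime form give $\bitsize(t_0) = \bitsize(F(\beta_0)) \le q_2(L)$. Taking $q$ to dominate $q_0(O(L),p)$, $q_1(L)$ and $q_2(L)$ completes the proof.

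The main obstacle is less conceptual than bureaucratic: one must compose several bitsize estimates (Cramer's rule, Hadamard's bound, the basic-feasible-solution size lemma, and rational arithmetic) into a single polynomial $q$, and cite the correct lemmas from \cite{schrijver:2000:TheoryLinearInteger}. A minor subtlety worth verifying explicitly is that a geometric face of a cell $C^\pi$ really is a face of $\A(\H)$ in the sense of Section~\ref{sect:arrng}, which follows because every bounding hyperplane of $C^\pi$ belongs to $\H$ and hence the face can be recovered as an intersection of $C^\pi$ with its neighbor cells across those hyperplanes.
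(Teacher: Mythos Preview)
Your proposal is correct and follows essentially the same route as the paper: bound bitsizes via standard rational-polyhedron estimates for (a), then for (b) observe that the minimum is attained on a face of $\A(\H)$, apply (a) to get a small minimizer $\beta_0$, and read off $\bitsize(t_0)$ from $\bitsize(\beta_0)$. The only cosmetic differences are that the paper, for (a), passes to a \emph{minimal} subface $\Phi' \subseteq \Phi$ (so $\Phi' = \text{affine.hull}(\Phi')$ is cut out by at most $p$ independent equations from $\H$, and a small point comes directly from solving that linear system rather than from the general polyhedral lemma), and for (b) it bounds $\bitsize(t_0)$ by the one-line observation that $F$ is polynomial-time computable, so its output has size polynomial in the size of its input $\beta_0$.
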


\begin{proof}
    Let $\Phi$ be a face and $\Phi'$ be a minimal face such that $\Phi' \subseteq \Phi$.\footnote{For the sake of simplicity, a reader can always think of $\Phi'$ as a vertex; but generally, an arrangement need not have vertices, so we prefer this more careful formulation.} 
Since $\Phi'$ is minimal, 
$\Phi' = \text{affine.hull}(\Phi')$. Thus $\Phi'$ can be described
by a system of $d$ independent equations from 
\eqref{eq:hyperplanes}, where $d \leq p$ is the dimension of $\Phi'$. 
The bitsize of the system can be obviously bounded by $L$ and 
a solution $b$ of the system can be found in polynomial time. 
Thus, $\bitsize(b) \leq q'(L)$ for some polynomial $q'$.

To prove (b), let $\Phi$ be a minimal face in which the argmin is attained. By (a), this face contains a point $\beta_0$ with $\bitsize(\beta_0) \leq q'(L)$. Since $F$ is a function computable in polynomial time, the bitsize of its output $t_0 = F(\beta_0)$ is polynomially bounded by the bitsize of the input $\beta_0$. It follows that the bitsize of $t_0$ is also bounded 
by a polynomial in $L$.
\end{proof}

\subsection{The polynomial algorithm for functions in $\textsf{CCC}$}

First we will need a YES-NO oracle for the decision problem 
``given $t$, does there exist $\beta$ such that $F(\beta) \leq t$?''.
This oracle is based on the ellipsoid method and will be described in Section~\ref{elli}. Here we use it as a blackbox. The necessary properties are summarized by the following lemma. 

We formulate the oracle in a slightly more general way, which will be useful below.

\begin{lemma}\label{lem:oracle}
There exists a YES-NO algorithm $\mathfrak{A}$ for the following decision problem: 
``given $(X \in \mathbb{Q}^{n\times p}$, 
        $y \in \mathbb{Q}^n$, 
				$\alpha \in \mathbb{Q}^n$, 
				$t \in \mathbb{Q}$,
				$W \in \mathbb{Q}^{k \times p}$,
				$z \in \mathbb{Q}^{p})$,			
does there exist a $\beta\in \mathbb{R}^p$ such that
\begin{itemize}
\item[(a)] $W\beta \leq z$ and 
\item[(b)] $F(\beta) \leq t$?''
\end{itemize}
Moreover, if the bitsizes of $W, z, t$ are polynomially bounded in $L$,
then algorithm $\mathfrak{A}$ works 
in polynomial time with respect to $L$.
\end{lemma}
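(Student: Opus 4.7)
The plan is to reduce the problem to linear-programming feasibility and solve it by the ellipsoid method driven by a separation oracle. By equation~\eqref{eq:maxform} in Lemma~\ref{lem:mx},
$$F(\beta) \;=\; \max_{\pi\in\SSn}\sum_{i=1}^n \alpha_i\bigl(y_{\pi(i)}-x_{\pi(i)}\T\beta\bigr),$$
so the set $S\coloneqq\{\beta\in\RR^p : W\beta\le z,\ F(\beta)\le t\}$ is the solution set of a linear system consisting of the $k$ rows of $W\beta\le z$ together with one inequality $\sum_{i=1}^n \alpha_i(y_{\pi(i)}-x_{\pi(i)}\T\beta)\le t$ for each $\pi\in\SSn$. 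Every coefficient of every such inequality is built from entries of $X$, $y$, $\alpha$, $t$, $W$ or $z$, so each has bitsize polynomially bounded in $L$, even though there are $n!$ of them.

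Because the number of inequalities is superpolynomial, I will not write the system explicitly. Instead I build a polynomial-time separation oracle for $S$ and invoke the ellipsoid-based feasibility algorithm for polyhedra given only by an oracle (Gr\"otschel--Lov\'asz--Schrijver, or \cite{schrijver:2000:TheoryLinearInteger} Ch.~14). On input $\beta\in\QQ^p$, the oracle first checks $W\beta\le z$ row by row and, if some row is violated, returns it. Otherwise it computes the residuals $r_i^\beta = y_i - x_i\T \beta$, sorts them to obtain a permutation $\pi^*$ with $r^\beta_{\pi^*(1)}\le\dots\le r^\beta_{\pi^*(n)}$, and evaluates $F(\beta)=\sum_{i=1}^n \alpha_i r^\beta_{\pi^*(i)}$; the Claim in the proof of Lemma~\ref{lem:mx} guarantees that $\pi^*$ attains the maximum in \eqref{eq:maxform}. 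If $F(\beta)\le t$ the oracle answers YES; otherwise it emits the inequality indexed by $\pi^*$ as a violated separating hyperplane. This oracle runs in time polynomial in $L$ and in $\bitsize(\beta)$, and the returned hyperplane has coefficients of bitsize polynomial in $L$.

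To make the ellipsoid method terminate in polynomially many iterations, I restrict the search to a Euclidean ball of radius $R=2^{q'(L)}$ around the origin for a suitable polynomial $q'$; standard polyhedral-bitsize estimates, analogous to those used in Lemma~\ref{lem:q}, ensure that if $S\neq\emptyset$ then $S$ contains a rational point of bitsize polynomial in $L$, hence a point in this ball. To bypass the possible lower-dimensionality of $S$, I replace the defining constraints by $W\beta\le z+\varepsilon\mathbf{1}$ and $F(\beta)\le t+\varepsilon$ with $\varepsilon=2^{-r(L)}$ for a sufficiently large polynomial $r$; since all inequalities have polynomial bitsize, a standard argument along the lines of Part~I of \cite{schrijver:2000:TheoryLinearInteger} shows that the perturbed system is feasible iff the original one is, and that its feasible region, if nonempty, is full-dimensional with volume bounded below by $2^{-\text{poly}(L)}$. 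The separation oracle above extends verbatim to the perturbed system, providing everything the ellipsoid method needs.

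The main obstacle is the bookkeeping for running the ellipsoid method against exponentially many implicit constraints: one must guarantee simultaneously that (i) the separation subroutine does polynomial work per call, (ii) the bit-lengths of separating hyperplanes remain polynomial, and (iii) the YES/NO answer is preserved under the $\varepsilon$-relaxation. Items (i) and (ii) are resolved by the sort-and-evaluate formula for $F$, which converts the exponential-size max into a single $O(n\log n + np)$ computation, while (iii) follows from the polynomial bitsize of \emph{all} defining inequalities, which in turn follows from the hypothesis that $X,y,\alpha,W,z,t$ are all of bitsize polynomial in $L$.
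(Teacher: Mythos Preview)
Your proposal is correct and follows essentially the same approach as the paper: both reformulate $F(\beta)\le t$ via \eqref{eq:maxform} as a system of $n!$ linear inequalities, observe that a polynomial-time separation oracle is available by sorting the residuals and returning the corresponding inequality, perturb the system by $2^{-\text{poly}(L)}$ to guarantee full-dimensionality and a volume lower bound, enclose everything in a ball of radius $2^{\text{poly}(L)}$, and run the ellipsoid method. The paper spells out the central-cut update and proves the separator's validity in a separate lemma, but the underlying argument is the same as yours.
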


Sometimes, property (a) will not be necessary; then we will use the notation
$$
\mathfrak{A}^*(X,y,\alpha,t) \coloneqq
\mathfrak{A}(X,y,\alpha,t, W\coloneqq (0, \dots, 0), z \coloneqq 0).
$$

We will also need the following form of Diophantine approximation.
Recall that Diophantine approximation is a method for finding a rational number with a small denominator close to a given rational number.

\begin{lemma}[Diophanine approximation, Corollary 6.3a in 
\cite{schrijver:2000:TheoryLinearInteger}]\label{lem:dioph} \mbox{\ }
\begin{itemize}
\item[(a)] Given $\gamma \in \mathbb{Q}$ and $M \in \mathbb{N}$,
there exists at most one rational number $\frac{\varrho}{\vartheta}$
satisfying 
\begin{equation}
\text{(i)\ }1 \leq \vartheta \leq M\text{\qquad and\qquad (ii)\ }\left|\gamma - \frac{\varrho}{\vartheta}\right| < \frac{1}{2M^2}.
\label{eq:daprx}
\end{equation}
(Here, $\varrho \in \mathbb{Z}$ and $\vartheta \in \mathbb{N}$.)
\item[(b)] There is a polynomial-time algorithm 
$\mathcal{D}(\gamma, M)$
which tests if the number $\frac{\varrho}{\vartheta}$ exists, and if so, the algorithm finds it.
\end{itemize}
\end{lemma}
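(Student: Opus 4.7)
The plan is to prove (a) by an elementary separation argument and (b) by computing the continued fraction expansion of $\gamma$.

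\emph{Part (a).} Suppose, for contradiction, that two distinct rationals $\frac{\varrho_1}{\vartheta_1}\neq\frac{\varrho_2}{\vartheta_2}$ both satisfy (i) and (ii). Distinctness forces $|\varrho_1\vartheta_2 - \varrho_2\vartheta_1|$ to be a positive integer, hence at least $1$, so
\[
\left|\frac{\varrho_1}{\vartheta_1} - \frac{\varrho_2}{\vartheta_2}\right| = \frac{|\varrho_1\vartheta_2 - \varrho_2\vartheta_1|}{\vartheta_1\vartheta_2} \geq \frac{1}{\vartheta_1\vartheta_2} \geq \frac{1}{M^2}.
\]
On the other hand, the triangle inequality applied to $\left|\gamma - \frac{\varrho_i}{\vartheta_i}\right| < \frac{1}{2M^2}$ for $i=1,2$ yields a strict upper bound of $\frac{1}{M^2}$ on the same distance, a contradiction.

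\emph{Part (b).} For the algorithm I would use the continued fraction expansion of $\gamma$. Run the Euclidean algorithm to produce the partial quotients $a_0, a_1, \ldots, a_r$ with $\gamma = [a_0; a_1, \ldots, a_r]$, maintaining in parallel the convergents $\frac{p_k}{q_k}$ via the standard recurrences $p_k = a_k p_{k-1} + p_{k-2}$ and $q_k = a_k q_{k-1} + q_{k-2}$. Stop as soon as $q_{k+1} > M$ or the expansion terminates. For each convergent $\frac{p_k}{q_k}$ with $q_k \leq M$ computed along the way, test whether inequality (ii) holds; by part (a) at most one can, so return that convergent if found, otherwise report that no approximation exists.

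Correctness rests on Legendre's classical theorem: any rational $\frac{\varrho}{\vartheta}$ with $\left|\gamma - \frac{\varrho}{\vartheta}\right| < \frac{1}{2\vartheta^2}$ is necessarily a convergent of $\gamma$. Since $\vartheta \leq M$ implies $\frac{1}{2M^2} \leq \frac{1}{2\vartheta^2}$, every candidate satisfying (i)--(ii) appears among the computed convergents, and the scan will find it (if any). For complexity, the denominators obey $q_{k+1} \geq q_k + q_{k-1}$, so they grow at least as fast as the Fibonacci sequence and only $O(\log M)$ iterations occur; each Euclidean step is polynomial in $\bitsize(\gamma)$, and each verification of (ii) is a rational comparison of polynomially bounded bitsize, yielding overall running time polynomial in $\bitsize(\gamma) + \log M$. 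The principal obstacle, if one does not simply cite the textbook, is invoking Legendre's theorem---the only genuinely number-theoretic ingredient; everything else is bookkeeping with the standard recurrences.
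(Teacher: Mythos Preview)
The paper does not prove this lemma at all; it is quoted verbatim as Corollary~6.3a from Schrijver's \emph{Theory of Linear and Integer Programming} and used as a black box. So there is no ``paper's own proof'' to compare against.

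That said, your argument is correct and is in fact the standard one underlying the cited corollary. Part~(a) is exactly the usual gap argument: two distinct rationals with denominators at most $M$ are at least $1/M^2$ apart, while both being within $1/(2M^2)$ of $\gamma$ would force them closer than that. Part~(b) is the classical continued-fraction method, and your appeal to Legendre's theorem is the right bridge: since $\vartheta\le M$ gives $\tfrac{1}{2M^2}\le\tfrac{1}{2\vartheta^2}$, any solution of (i)--(ii) satisfies Legendre's hypothesis and hence equals some convergent $p_k/q_k$ with $q_k\le M$, so scanning those convergents is complete. The complexity analysis (Fibonacci growth of the $q_k$, polynomial cost of each Euclidean step and each rational comparison) is also fine. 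One cosmetic point: Legendre's theorem is stated for fractions in lowest terms, but since any non-reduced solution has the same value as its reduced form, which then is a convergent, your scan still finds it.
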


Now we are ready to formulate the body of the $\textsl{CCC}$-algorithm. Recall that we are given $(X,y,\alpha)$ as input and the task is to find (some) $\beta_0$ such that 
$$F(\beta_0) = t_0, \text{\ \ where\ \ } 
t_0 = \min_{\beta\in\mathbb{R}^p} F(\beta),
$$ 
or assert that the problem is unbounded (from below). 

\myparagraph{Notation} When $A,B$ are matrices with the same number of columns, 
then $\binom{A}{B}$ stands for their concatenation, i.e.~the matrix resulting from sticking $B$ under $A$.

\begin{algorithm}[!h]
    \begin{algorithmic}[1]
        \Require {$X \in \QQ^{n\times p},y \in \QQ^n,\alpha \in \QQ^n$ s.t. $\alpha_1 \leq \cdots \leq \alpha_n$}
        \State $L \coloneqq \bitsize(X,y,\alpha)$;\quad let $q$ be the polynomial from Lemma~\ref{lem:q}
        \vskip5pt
        \Statex \textit{Unboundedness test:}
        \State \textbf{If} $\mathfrak{A}^*(X,y,\alpha,t=-2^{q(L)}-1) = \text{``YES''}$ \textbf{then}
        report ``unbounded'' and stop\label{alg:ccc:unboudedness}
        \vskip5pt
        \Statex \textit{Binary search to localize $t_0$ in a narrow interval $[\underline{t}, \overline{t}]$:}
        \State $\overline{t} \coloneqq 2^{q(L)}$;\quad $\underline{t} \coloneqq -2^{q(L)}$\label{alg:ccc:initial:bounds}
        \While {$\overline{t} - \underline{t} > 2^{-2q(L)-1}$} \label{alg:ccc:while:start}
        \State $t^* \coloneqq \frac{1}{2}(\overline{t} + \underline{t})$ \label{alg:ccc:update:step}
        \State \textbf{If} $\mathfrak{A}^*(X,y,\alpha,t^*) = \text{``YES''}$ \textbf{then} $\overline{t} \coloneqq t^*$ \textbf{else} $\underline{t} \coloneqq t^*$\label{alg:ccc:while:inside:test}
        \EndWhile \label{alg:ccc:while:end}
        \vskip5pt
        \Statex \textit{Retrieve $t_0$ from $[\underline{t}, \overline{t}]$ by Diophantine approximation:}
        \State$t_0 \coloneqq \mathcal{D}(\gamma = \frac{1}{2}(\overline{t} + \underline{t}), M = 2^{q(L)})$\label{alg:ccc:diophantine}
        \vskip5pt
        \Statex\textit{Find a minimizer $\beta_0$:}
        \State Let $W$ be an empty matrix and $z$ an empty vector\label{alg:ccc:for:initialization}
        \For{$1\le i < j \le n$}\label{alg:ccc:for:start}
        \State $W^* \coloneqq \binom{W}{x_i\T-x_j\T}$;\quad $z^* \coloneqq \binom{z}{y_i-y_j}$\label{alg:ccc:for:matrix:updates}
        \State \textbf{If} $\mathfrak{A}\left(X,y,\alpha,t_0,\binom{W^*}{-W^*},\binom{z^*}{-z^*}\right) =  \text{``YES''}$ \textbf{then} $W\coloneqq W^*$, $z \coloneqq z^*$\label{alg:ccc:for:oracle:calls}
\EndFor\label{alg:ccc:for:end}
        \vskip5pt
        \Ensure{}
        \State A minimizer $\beta_0$ is (any) solution of the system $W\beta_0 = z$ \label{alg:ccc:output}
    \end{algorithmic}
    \caption{\textsl{CCC}-algorithm}
    \label{alg:ccc}
\end{algorithm}

\myparagraph{The algorithm}
The CCC-algorithm is formalized as Algorithm \ref{alg:ccc}.
\begin{theorem}
The \textsl{CCC}-algorithm is correct and works in polynomial time.
\end{theorem}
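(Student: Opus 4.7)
The plan is to verify correctness phase by phase, then aggregate the time bounds. For the unboundedness test (Line \ref{alg:ccc:unboudedness}), I would invoke Lemma~\ref{lem:q}(b): if the infimum $t_0$ is attained, then $\bitsize(t_0) \leq q(L)$ and so $|t_0| \leq 2^{q(L)}$. Hence $\mathfrak{A}^*(X,y,\alpha,-2^{q(L)}-1)$ returns YES if and only if $F$ is unbounded below. For the binary search (Lines \ref{alg:ccc:while:start}--\ref{alg:ccc:while:end}), I would maintain the invariant $\underline{t} \le t_0 \le \overline{t}$: it is valid initially because $|t_0| \le 2^{q(L)}$ and is preserved by the branching on Line \ref{alg:ccc:while:inside:test}. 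The loop halves $\overline{t}-\underline{t}$ per iteration and terminates after $O(q(L))$ iterations. At that point $|\gamma - t_0| \le (\overline{t}-\underline{t})/2 < 2^{-2q(L)-1} = 1/(2M^2)$ with $M = 2^{q(L)}$; since the denominator of $t_0$ in lowest form is at most $M$, Lemma~\ref{lem:dioph} guarantees that Line \ref{alg:ccc:diophantine} recovers $t_0$ exactly.

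The heart of the proof is the correctness of the final loop (Lines \ref{alg:ccc:for:initialization}--\ref{alg:ccc:for:end}). Let $\Phi^* \coloneqq \{\beta \in \RR^p : F(\beta) = t_0\}$, a nonempty convex polyhedron of minimizers, and write $A \coloneqq \{\beta : W\beta = z\}$ at termination and $I$ for the final set of added pairs. The invariant $A \cap \Phi^* \ne \emptyset$ holds at initialization (empty system) and is preserved by Line \ref{alg:ccc:for:oracle:calls}, which appends a row only when the oracle confirms the enlarged equality system still admits $\beta$ with $F(\beta) \le t_0$. The decisive claim is that $A \subseteq \Phi^*$ at termination, so that any solution of $W\beta_0 = z$ on Line \ref{alg:ccc:output} is a minimizer. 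Because the constraint set only shrinks throughout the loop, for every $(i,j) \notin I$ we have $A \cap H_{ij} \cap \Phi^* = \emptyset$ at termination. Suppose for contradiction that some $\beta \in A$ has $F(\beta) > t_0$, and pick $\beta^* \in A \cap \Phi^*$. On the segment $[\beta^*, \beta] \subseteq A$, convexity of $F$ together with $F \ge t_0$ forces $F \equiv t_0$ on a closed initial subsegment $[\beta^*, \beta_{\gamma_0}]$ and $F > t_0$ immediately beyond. Since $F$ is cell-wise linear on $\arr$, the slope break at $\beta_{\gamma_0}$ can only occur where the segment crosses some hyperplane $H_{ij} \in \mathcal{H}$. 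For $(i,j) \in I$ the containment $A \subseteq H_{ij}$ keeps the segment on $H_{ij}$ (no crossing), so the crossing hyperplane satisfies $(i,j) \notin I$; but then $\beta_{\gamma_0} \in A \cap H_{ij} \cap \Phi^*$, contradicting the terminal condition.

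For the polynomial-time bound I would tally: the unboundedness test and binary search contribute $O(q(L))$ oracle calls with polynomially bounded parameters; the Diophantine step is polynomial in $L$ by Lemma~\ref{lem:dioph}(b); and the final loop makes $\binom{n}{2} = O(n^2)$ oracle calls, each on a system whose accumulated matrix $W$ has at most $\binom{n}{2}$ rows of bitsize $O(L)$. By Lemma~\ref{lem:oracle} each oracle call is polynomial in $L$, and computing $\beta_0$ from the consistent system $W\beta_0 = z$ reduces to one Gaussian elimination. The main obstacle is the key claim in the previous paragraph: an arbitrary affine flat through a minimizer need not lie in $\Phi^*$, so the argument must exploit the piecewise-linear structure of $F$ on the arrangement \arr{} and the crucial fact that slope changes of $F$ along a line can occur only across hyperplanes not already contained in $A$.
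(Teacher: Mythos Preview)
Your argument is correct. The treatment of the unboundedness test, the binary search, the Diophantine rounding, and the polynomial-time accounting mirrors the paper's proof essentially line for line.

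The one place where you diverge is the correctness of the final for-loop (Lines \ref{alg:ccc:for:initialization}--\ref{alg:ccc:for:end}). The paper argues structurally: it asserts that at termination the system $W\beta=z$ describes the affine hull of a \emph{minimal face} $\Phi$ of $\arr$ containing a minimizer; since a minimal face coincides with its affine hull and $F$ is linear on any face, $F\equiv t_0$ on $\Phi$ and every solution of the system is a minimizer. You instead prove $A\subseteq\Phi^*$ directly by a segment argument: from the rejected pairs you extract the terminal condition $A\cap H_{ij}\cap\Phi^*=\emptyset$ for all $(i,j)\notin I$, then force a contradiction by locating a slope break of $F$ along a segment in $A$ and showing it must sit on a hyperplane with $(i,j)\notin I$. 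Your route avoids the ``minimal face'' machinery altogether and is arguably more self-contained; the only point that deserves a line of justification (which you flag) is that a slope break of the restriction of $F$ to a line can occur only at a transversal crossing of some $H_{ij}$. This follows from the representation $F=\max_{\pi}\ell_\pi$ in \eqref{eq:maxform}: along the segment the directional derivative is unchanged under swaps of tied residuals (because $(x_k-x_\ell)^{\mathrm T}(\beta-\beta^*)=0$ whenever the segment lies in $H_{k\ell}$), so the active linear piece can change only when the ordering of two residuals actually flips, i.e.\ at a transversal crossing. With that observation your contradiction closes cleanly.
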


\begin{proof}
By Lemma \ref{lem:q}(b), $\bitsize(t_0) \leq q(L)$. 
A number, which can be written down by $q(L)$ bits, can be at most
$2^{q(L)}$ in absolute value. Thus, if the problem is bounded, then
$t_0 \geq -2^{q(L)}$. It follows that if there exists $\beta$ such that $L(\beta) \leq -2^{q(L)} - 1$, then the problem is unbounded. This proves correctness of Line~2.

The same argument proves that for the initial bounds set in Line \ref{alg:ccc:initial:bounds}, we have
\begin{equation}\label{eq:t}
\underline{t} \leq t_0 \leq \overline{t}, 
\end{equation}
and by the update step on Line~6 it follows that \eqref{eq:t} holds true
also after the last iteration of the while-cycle on Lines \ref{alg:ccc:while:start}--\ref{alg:ccc:while:end}.

\emph{Proof of correctness of Line \ref{alg:ccc:diophantine}.} In Line~\ref{alg:ccc:diophantine} we have 
$$
\overline{t} - \underline{t} \leq 2^{-2q(L)-1} 
$$
and \eqref{eq:t} holds true.
We run the Diophantine approximation of 
$$
\gamma = \tfrac{1}{2}(\overline{t}+\underline{t}) \text{\ \ \ with\ \ \ } M = 2^{q(L)}.
$$
\emph{Observation. The number $t_0 =: \frac{\varrho}{\vartheta}$ satisfies (i) and (ii) in \eqref{eq:daprx}}.
Proof.
Since 
$\bitsize(\vartheta) \leq \bitsize(t_0) \leq q(L)$, we have
$1 \leq \vartheta \leq 2^{q(L)} = M$. Thus (i) is satisfied.
Now 
$$
\left|\gamma - \frac{\varrho}{\vartheta}\right| = |\gamma - t_0| \leq \frac{1}{2}(\overline{t}-\underline{t}) \leq \frac{1}{2} \cdot 
2^{-2q(L)-1} = \frac{1}{2} \cdot \frac{1}{2M^2} < \frac{1}{2M^2}.
$$
So, (ii) holds true and the proof of the observation is complete.

It follows that $t_0 = \frac{\varrho}{\vartheta}$ is exactly the number which is found
by the algorithm from Lemma~\ref{lem:dioph}(b). (And the algorithm cannot terminate with a report that no such number exists.) The proof of correctness of Line \ref{alg:ccc:diophantine} is complete.

The for-cycle on Lines \ref{alg:ccc:for:start}--\ref{alg:ccc:for:end} finds the affine hull of a minimal face of the arrangement \arr{} which contains a minimizer of $F(\beta)$. 
First we start the search in the entire space 
$\mathbb{R}^p$ (Line \ref{alg:ccc:for:initialization}). Then, on Lines \ref{alg:ccc:for:matrix:updates}--\ref{alg:ccc:for:oracle:calls}, 
we are asking the oracle $\mathfrak{A}$ whether or not 
a restriction of the search space into the hyperplane
$H_{ij} = \{\beta:\ (x_i - x_j)\T\beta = y_i - y_j\}$ cuts off all minimizers. If it does, we drop the restriction; otherwise we retain it (see the ``then'' part of Line \ref{alg:ccc:for:oracle:calls}). 

Finally, in Line \ref{alg:ccc:output}, we get a linear system 
\begin{equation}
W\beta_0 = z 
\label{eq:finsyst}
\end{equation}
which describes the affine hull of the minimal face $\Phi$ containing $\beta_0$. Since $\Phi$ is minimal, $\Phi = \text{affine.hull}(\Phi)$ and 
it is correct to take \emph{any} solution $\beta_0$ 
of \eqref{eq:finsyst}.

\emph{Polynomiality.} It is easy to show that the algorithm works in
polynomial time. The number of iterations of the for-cycle on Lines \ref{alg:ccc:for:start}--\ref{alg:ccc:for:end} is $O(n^2)$. The number of iterations of the while-cycle
on Lines \ref{alg:ccc:while:start}--\ref{alg:ccc:while:end} is bounded by 
$$
O\left(\log_2 \frac{2 \cdot 2^{q(L)}}{2^{-2q(L)-1}}\right) = O(q(L)).
$$

Then, observe that whenever we call the oracle 
$\mathfrak{A}$, the size of input $(W, z, t)$ is polynomially bounded in $L$. (Namely, each of the numbers $\underline{t}, \overline{t}$ inside the iterations of while-cycle on Lines \ref{alg:ccc:while:start}--\ref{alg:ccc:while:end}, can be written down with denominator not exceeding $2^{2q(L)+1}$.) By Lemma~\ref{lem:oracle}, a call of $\mathfrak{A}$ consumes polynomial time and there is a polynomial number of such calls. And, by Lemma~\ref{lem:dioph}, Line \ref{alg:ccc:diophantine} works in polynomial time since the input $(\gamma, M)$ has polynomially bounded bitsize. 
\end{proof}

\subsection{Construction of the oracle $\mathfrak{A}$: 
Proof of Lemma~\ref{lem:oracle}}\label{elli}

The algorithm $\mathfrak{A}$ is based on a version of the ellipsoid method with central cuts, a membership oracle and a separation oracle. We assume that a reader is familiar with the mechanics and proof tricks around the ellipsoid method, which are mostly based on Big-$L$ arguments. 
Excellent sources are \cite{schrijver:2000:TheoryLinearInteger} and \cite{grotschel:1993:Geometricalgorithmscombinatorial}.
The only ``really interesting'' part of our algorithm is the construction of the membership oracle and the separation oracle. The rest is a standard in the ellispoid-method theory and is described only with (tacit) references to the above mentioned books. 

Recall that the algorithm $\mathfrak{A}$ gets $(X, y, \alpha, t, W, z)$ as input and decides whether there exists $\beta$ such that
\begin{subequations}
    \label{eq:conds}
\begin{align}
    \label{eq:cond:a}W\beta &\leq z,\\
    \label{eq:cond:b}F(\beta)& \leq t.
\end{align}
\end{subequations}

The core of $\mathfrak{A}$ is in reformulation of this question
into a linear programming form. Condition \eqref{eq:cond:a} is a system of linear inequalities. The following lemma shows how to reformulate \eqref{eq:cond:b}. 

\begin{lemma}
    Let $t \in \mathbb{R}$. Then $F(\beta) \leq t$ if and only if 
$\beta$ satisfies the system
\begin{equation}
t \geq \sum_{i=1}^n \alpha_{i}(y_{\pi(i)} - x_{\pi(i)}\T\beta) 
\quad \forall \pi \in \mathcal{S}_n\label{eq:permx}.
\end{equation}
\end{lemma}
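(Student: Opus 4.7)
My plan is to obtain this lemma as an essentially immediate consequence of the representation \eqref{eq:maxform} that was already established inside the proof of Lemma~\ref{lem:mx}. That proof shows, under Assumptions \ref{ass:rank}, \ref{as:reg}, \ref{ass:cont} and \ref{ass:convex}, that
\[
F(\beta) \;=\; \max_{\pi \in \mathcal{S}_n} \sum_{i=1}^n \alpha_i \bigl(y_{\pi(i)} - x_{\pi(i)}\T \beta\bigr),
\]
via the Claim that any deviation from the rank-matching permutation $\pi_0$ can only decrease the sum $\sum_i \alpha_i r_{\pi(i)}^{\beta}$.

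Given this representation, the proof of the lemma collapses to the elementary observation that a finite maximum of real numbers satisfies $\max_\pi a_\pi \leq t$ if and only if $a_\pi \leq t$ for every $\pi$. Concretely, $F(\beta) \leq t$ is equivalent to $\max_{\pi \in \mathcal{S}_n}\sum_{i=1}^n \alpha_i(y_{\pi(i)} - x_{\pi(i)}\T \beta) \leq t$, which is equivalent to the system \eqref{eq:permx} holding for all $\pi \in \mathcal{S}_n$.

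The only subtle point I would need to double-check, and in my view the main (mild) obstacle, is that \eqref{eq:maxform} really holds for every $\beta \in \mathbb{R}^p$ and not only for $\beta$ in the union of cell interiors. On a face of dimension $\leq p-1$ some residuals coincide, so several permutations $\pi_0$ satisfy the weak ordering $r^{\beta}_{\pi_0(1)} \leq \cdots \leq r^{\beta}_{\pi_0(n)}$; however, whenever two residuals $r_{\pi_0(k)}^\beta$ and $r_{\pi_0(\ell)}^\beta$ are equal, swapping them in $\pi_0$ does not alter the value of $\sum_i \alpha_i r_{\pi_0(i)}^\beta$, so the sum depends only on $\beta$. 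Combined with the continuity of $F$ established immediately before Lemma~\ref{lem:mx}, this shows that the identity \eqref{eq:maxform} extends to all of $\mathbb{R}^p$, and the lemma then follows by the one-line equivalence above.
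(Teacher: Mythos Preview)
Your proposal is correct and is precisely the paper's own approach: the paper's proof reads in its entirety ``Directly from \eqref{eq:maxform}.'' Your additional remark about extending \eqref{eq:maxform} from cell interiors to all of $\mathbb{R}^p$ via continuity and the indifference to ties is a harmless (and welcome) elaboration, but the core argument is identical.
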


\begin{proof}
Directly from \eqref{eq:maxform}.
\end{proof}

\myparagraph{The main obstacle}
System \eqref{eq:permx} has a superpolynomial number of inequalities.
This is why we cannot simply write system \eqref{eq:cond:a}--\eqref{eq:cond:b} as a single linear programming problem
\begin{equation}
W\beta \leq z, \quad
t \geq \sum_{i=1}^n \alpha_{i}(y_{\pi(i)} - x_{\pi(i)}\T\beta) 
\quad \forall \pi \in \mathcal{S}_n
\label{eq:mainsyst}
\end{equation}
(where $t$ is a fixed parameter and $\beta$ are variables).
Neither can we work with the dual problem
since it has an excessive number of variables.

\myparagraph{Details of the ellipsoid method}
To overcome the obstacle, the right tool is the \emph{ellipsoid method with separation oracle}
as described in~\cite{grotschel:1993:Geometricalgorithmscombinatorial}. 

We will use
the version with central cuts. Given an ellipsoid
\begin{displaymath}\mathcal{E}(E,c) \coloneqq \{b \in \mathbb{R}^p: (b - c)\T E^{-1} (b-c) \leq 1\}, \end{displaymath}
where $E$ is a positive definite matrix (and $c$ is the \emph{center}),
and a vector $s \neq 0$, the \emph{central cut} generates the half-ellipsoid $\mathcal{H}\coloneqq \mathcal{E}(E,c) \cap 
\{b: (b - c)\T s \leq 0\}$. Then, $\mathcal{H}$ is covered by the minimum-volume (L\"owner-John) ellipsoid $\mathcal{E}(E',c')$, which is known to have the form
\begin{equation}
E' = \frac{n^2}{n^2-1}\left(E - \frac{2}{n+1}\frac{Es s\T E}{s\T E s}
\right),
\quad
c' = c - \frac{1}{n+1}\frac{Es}{\sqrt{s\T E s}}.
\label{eq:cut}
\end{equation}

To specify (a version of) the ellipsoid algorithm in full, 
one must also specify the initial ellipsoid, the methods for testing feasibility (``membership oracle''), 
the method for generation of the separator $s$ (``separation oracle'') and the termination criterion. First we summarize some technical details. Observe that \eqref{eq:mainsyst} defines a convex polyhedron in $\mathbb{R}^p$; let us denote it by $\mathcal{P}$.

\begin{lemma}\label{lem:voser} 
There are polynomials $q', q'', q'''$ with the following properties.
\begin{itemize}
\item[(a)] Let 
\begin{multline*}
\mathcal{P}' \coloneqq 
\Big\{b:\ Wb \leq z + 2^{-q'(L)}e, \ \
t + 2^{-q'(L)} \geq \sum_{i=1}^n \alpha_{\pi(i)}(y_{i} - x_{i}\T b)\ \forall \pi \in \mathcal{S}_n\Big\},
\end{multline*}
where $e = (1, \dots, 1)\T$.
The polynomial $q'(L)$satisfies that 
$\mathcal{P}' = \emptyset$ if and only if $\mathcal{P} = \emptyset$. Moreover, if $\mathcal{P}' \neq \emptyset$, then 
$\mathcal{P}'$ is full-dimensional with
$\text{volume}(\mathcal{P}') \geq 2^{-q''(L)}$.
\item[(b)] $\mathcal{P'} = \emptyset$
if and only if 
$\mathcal{P'} \cap 
\{b: -2^{q'''(L)}e \leq b \leq 2^{q'''(L)}e\} = \emptyset$.
\end{itemize} 
\end{lemma}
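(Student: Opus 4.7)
The plan is to treat this as a bit-size estimate for rational polyhedra in the style of \cite[Ch.~10]{schrijver:2000:TheoryLinearInteger}; the only non-standard twist is that $\mathcal{P}$ carries super-polynomially many permutation inequalities from \eqref{eq:permx}. I dispose of that at the outset by noting that every single inequality defining $\mathcal{P}$---whether a row of $Wb\leq z$ or a permutation inequality---has left- and right-hand-side coefficients of bit-size bounded by a polynomial in $L$, because each is a fixed-arity combination (sums and products of at most $O(n)$ terms) of entries of $X, y, \alpha, W, z, t$. Consequently, any classical bit-size estimate that depends only on \emph{per-row} bit-size (Farkas certificates using at most $p+1$ rows, basic solutions using at most $p$ tight rows, Cramer's rule bounds) applies uniformly despite the super-polynomial row count.

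With that, part (a) splits. The implication $\mathcal{P}\neq\emptyset \Rightarrow \mathcal{P}'\neq\emptyset$ is immediate since $\mathcal{P}\subseteq\mathcal{P}'$. Conversely, if $\mathcal{P}=\emptyset$, Farkas's lemma produces non-negative multipliers $\lambda_j$ supported on at most $p+1$ of the defining inequalities $a_j\T b \leq b_j$, with $\sum \lambda_j a_j = 0$ and $\sum \lambda_j b_j < 0$. A Cramer bound on the support subsystem yields multipliers of bit-size at most some polynomial $q^\sharp(L)$, so $\sum\lambda_j b_j \leq -2^{-q^\sharp(L)}$ while $\sum\lambda_j\leq 2^{q^\sharp(L)}$. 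Relaxing each r.h.s.\ by $2^{-q'(L)}$ shifts the certificate value by at most $2^{q^\sharp(L)-q'(L)}$, which remains below the margin $2^{-q^\sharp(L)}$ as soon as $q'(L) \geq 2q^\sharp(L) + 1$; hence $\mathcal{P}'=\emptyset$ as well.

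For the full-dimensional volume bound when $\mathcal{P}'\neq\emptyset$: by the equivalence just established $\mathcal{P}\neq\emptyset$ too, so pick any $b_0\in\mathcal{P}$. At $b_0$ every constraint of $\mathcal{P}'$ has slack at least $2^{-q'(L)}$. Bounding $\|a_j\|_\infty \leq 2^{p^\sharp(L)}$ via the per-row bit-size estimate, the $L_\infty$-ball of radius $r := 2^{-q'(L) - p^\sharp(L)}/p$ around $b_0$ is entirely contained in $\mathcal{P}'$, and its $p$-dimensional volume $(2r)^p$ is at least $2^{-q''(L)}$ for a suitable polynomial $q''$ absorbing the factor $p$ in the exponent.

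Part (b) is the standard ``basic solution fits in a box'' argument: if $\mathcal{P}'\neq\emptyset$, linear-programming theory gives a basic point obtained by selecting $p$ linearly independent inequalities of $\mathcal{P}'$, making them tight, and solving the $p\times p$ system. Cramer's rule together with the per-row bit-size bound forces each coordinate of this basic solution to have absolute value at most $2^{q'''(L)}$ for an appropriate polynomial $q'''$, so the point lies in the requested box, and the converse direction is trivial. The one conceptual obstacle throughout is convincing oneself that the super-polynomial number of permutation constraints is harmless; once the per-row bit-size observation is isolated, both parts reduce to textbook Big-$L$ manipulations.
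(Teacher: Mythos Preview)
Your proof is correct and is precisely the detailed expansion of what the paper only sketches: the paper's own proof says ``both arguments (a), (b) are standard in the theory of the ellipsoid method, see e.g.~\cite{schrijver:2000:TheoryLinearInteger}, Chapter~13,'' and singles out as the \emph{crucial point} that $t,W,z$ have bit-size polynomial in $L$---which is exactly your per-row bit-size observation, applied also to the permutation rows built from $X,y,\alpha$. Your Farkas/Cramer argument for the emptiness equivalence, the slack-ball argument for the volume bound, and the basic-solution argument for (b) are the standard pieces the paper is invoking; the one cosmetic looseness is that in (b) you assume $p$ linearly independent tight rows exist, which fails when the constraint matrix has rank $<p$, but the usual fix (work modulo the lineality space, or take a minimal face and solve on its affine hull) is routine and does not affect the bit-size conclusion.
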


\begin{proof} Both arguments (a), (b) are standard in the theory of the ellipsoid method, see e.g.~\cite{schrijver:2000:TheoryLinearInteger}, Chaper~13. The standard arguments are valid in our context, too, because the 
$\textsf{CCC}$-algorithm
uses oracle $\mathfrak{A}$ \emph{only with $t, W, z$ with bitsizes  
bounded by a polynomial in $L$} (this is a crucial point). So, the standard arguments allow us to derive the polynomials $q', q'', q'''$ from the polynomial $q$ in Lemma~\ref{lem:q}.
\end{proof}

The ellipsoid method is run with the blown-up polyhedron $\mathcal{P}'$
for which we apply Lemma \ref{lem:voser}(b). It implies that we can construct the initial
ellipsoid as 
\begin{equation}\label{eq:inieli}
\text{the smallest Euclidian ball covering the cube 
$\{b: -2^{q'''(L)}e \leq b \leq 2^{q'''(L)}e\}$.}
\end{equation} 
The estimate on volume from Lemma \ref{lem:voser}(a) implies that the method terminates after
a polynomial number of steps; this is the same argument as in 
\cite{schrijver:2000:TheoryLinearInteger}, Theorem~13.4. To recall shortly, just for the sake of completeness: the initial ellipsoid
is easy shown to have volume at most $2^{\widetilde q(L)}$, where
$\widetilde q$ is a polynomial, and one application
of the cut \eqref{eq:cut} decreases the volume by factor 
$\exp(-\frac{1}{2p+2}) < 1$. Since volume$(\mathcal{P}') \geq 2^{-q''(L)}$,
after at most $O(p + \widetilde{q}(L) + q''(L)) = O(L + \widetilde{q}(L) + q''(L))$ 
iterations the volume decreases under the lower bound $2^{-q''(L)}$. When this number of steps
is exhausted without finding a point in $\mathcal{P}'$, the result of
the algorithm $\mathfrak{A}$ is ``NO''.

\myparagraph{The work inside an iteration: the membership oracle and the separation oracle}
What remains to be described is the work inside an iteration. Inside an iteration, we have an ellipsoid centered in $c$ and we must first decide whether $c \in \mathcal{P}'$. If the answer is negative, we must construct the separator.

\emph{Membership oracle.} Given the center $c$, the condition
\begin{equation}\label{eq:Cone}
Wc \leq z + 2^{-q'(L)}
\end{equation} 
can be tested directly.
Then, evaluate $F(c)$. Now we have 
\begin{equation}\label{eq:Ctwo}
F(c) \leq t + 2^{-q'(L)}
\end{equation} if and only if $c \in \mathcal{P}'$.
To summarize: if both conditions \eqref{eq:Cone}, \eqref{eq:Ctwo} are satisfied, the ellipsoid method has found
out that $c \in \mathcal{P}'$, implying that $\mathcal{P} \neq \emptyset$ and that the answer of the algorithm 
$\mathfrak{A}$ is ``YES''.

\emph{Separation oracle.} If there is an $i$ such that
$
w_i\T c > z_i + 2^{-q'(L)}, 
$
where $w_i\T$ is the $i$th row of $W$ (where we can, without loss of generality, assume $w_i \neq 0$),
then the separation oracle outputs $s = w_i$.

Otherwise, we find (any) permutation $\pi$ such that
\begin{equation}
r^c_{\pi(1)} \leq r^c_{\pi(2)} \leq \cdots \leq r^c_{\pi(n)};
\label{eq:permcc}
\end{equation} 
this can be done in polynomial time by sorting. Then the oracle outputs 
\begin{equation}
s = -\sum_{i=1}^n \alpha_{i}x_{\pi(i)}.
\label{eq:grad}
\end{equation}

The fact that both the membership oracle and the separation oracle work in polynomial time is obvious. 
The correctness of the separation procedure follows from the next lemma.

\begin{lemma} There exists no $b$ such that $(b-c)\T s > 0$ and 
$F(b) \leq t + 2^{-q'(L)}$.
\end{lemma}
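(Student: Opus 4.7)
The plan is to interpret $s$ as the gradient of the specific affine piece of $F$ that is active at $c$. We only need to analyse the ``otherwise'' branch of the separation oracle (the one that outputs the vector in \eqref{eq:grad}): the preceding branch of the membership oracle is based on directly verifying the halfspace constraints, and is already covered by the inequality $w_i\T c > z_i + 2^{-q'(L)}$. In the interesting branch, we additionally know that the $F$-part of the membership test failed, so $F(c) > t + 2^{-q'(L)}$, and the permutation $\pi$ from \eqref{eq:permcc} sorts the residuals at $c$ in nondecreasing order.

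First I would introduce the affine function
\begin{equation*}
g_\pi(\beta) \coloneqq \sum_{i=1}^n \alpha_{i}\bigl(y_{\pi(i)} - x_{\pi(i)}\T \beta\bigr)
\end{equation*}
and observe that its gradient (in $\beta$) is exactly the vector $s$ defined in \eqref{eq:grad}. Hence $g_\pi(b) = g_\pi(c) + s\T(b - c)$ for every $b \in \RR^p$.

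Next I would apply \eqref{eq:maxform} twice. Since $\pi$ is one of the permutations in the max over $\SSn$, $F(b) \geq g_\pi(b)$ for every $b$. And because $\pi$ actually sorts the residuals at $c$, the Claim inside the proof of Lemma~\ref{lem:mx} shows that the $\pi$-term attains the max at $c$, so $F(c) = g_\pi(c)$. Combining the two,
\begin{equation*}
F(b) \geq F(c) + s\T(b - c) \quad \text{for every } b \in \RR^p,
\end{equation*}
which is just the supporting-hyperplane inequality asserting that $s$ is a subgradient of the convex function $F$ at $c$.

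To conclude, if some $b$ satisfied both $(b-c)\T s > 0$ and $F(b) \leq t + 2^{-q'(L)}$, then $F(b) > F(c) > t + 2^{-q'(L)}$, a contradiction. The only substantive step is the identification of $s$ from \eqref{eq:grad} with a subgradient of $F$ at $c$ via \eqref{eq:maxform} and the Claim in Lemma~\ref{lem:mx}; everything else is routine convex-analytic bookkeeping.
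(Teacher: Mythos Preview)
Your proof is correct and follows essentially the same route as the paper: both identify $s$ with the gradient of the affine piece $g_\pi(\beta)=\sum_i \alpha_i(y_{\pi(i)}-x_{\pi(i)}\T\beta)$, use \eqref{eq:qwe}/\eqref{eq:maxform} to get $F(b)\geq g_\pi(b)$ and $F(c)=g_\pi(c)$, and chain with $F(c)>t+2^{-q'(L)}$ from the failed membership test. Your phrasing via the subgradient inequality $F(b)\geq F(c)+s\T(b-c)$ is just a cleaner packaging of the paper's explicit computation with $\eta=\sum_i\alpha_i y_{\pi(i)}$ and the chain $t+2^{-q'(L)}\geq F(b)\geq b\T s+\eta>c\T s+\eta=F(c)>t+2^{-q'(L)}$.
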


\begin{proof}
For contradiction assume that there exists $b$ such that 
\begin{equation}
b\T s > c\T s\ \ \ \text{and}\ \ \ F(b) \leq t + 2^{-q'(L)}. 
\label{eq:assq}
\end{equation}
Let $\pi$ be a permutation from 
\eqref{eq:permcc}.
Define $\eta \coloneqq \sum_{i=1}^n\alpha_{i} y_{\pi(i)}$
and 
observe that
\begin{align*}
c\T s +\eta & = s\T c +\eta =
-\sum_{i=1}^n \alpha_{i}x\T_{\pi(i)}c + 
\sum_{i=1}^n\alpha_{i} y_{\pi(i)}
\\ &= 
\sum_{i=1}^n \alpha_{i}(y_{\pi(i)} - x_{\pi(i)}\T c)
= F(c).
\end{align*}
Similarly, if $\pi'$ is a permutation such that
$
r^b_{\pi'(1)} \leq r^b_{\pi'(2)} \leq \cdots \leq r^b_{\pi'(n)},
$
then 
\begin{align*}
b\T s +\eta & = s\T b +\eta =
-\sum_{i=1}^n \alpha_{i}x\T_{\pi(i)}b +
\sum_{i=1}^n\alpha_{i} y_{\pi(i)}
\\ &= 
\sum_{i=1}^n \alpha_{i}(y_{\pi(i)} - x_{\pi(i)}\T b)
\leq 
\sum_{i=1}^n \alpha_{i}(y_{\pi'(i)} - x_{\pi'(i)}\T b)
= F(b).
\end{align*}
The inequality follows from \eqref{eq:qwe}.

Now we have a contradiction:
$$
t + 2^{-q'(L)} 
\stackrel{\text{(a)}}{\geq} 
F(b) 
\geq 
b\T s + \eta 
\stackrel{\text{(b)}}{>}  c\T s + \eta
= F(c) 
\stackrel{\text{(c)}}{>}  t + 2^{-q'(L)}.
$$ 
Inequalities (a) and (b) follow from 
\eqref{eq:assq} and inequality (c) follows from
the fact that the membership oracle did not succeed in test \eqref{eq:Ctwo}. 
\end{proof}

This concludes the description of algorithm $\mathfrak{A}$ and the proof
of Lemma~\ref{lem:oracle}. 

\subsection{An illustration}

\begin{figure}[tb]
\centering
\includegraphics[width=14cm]{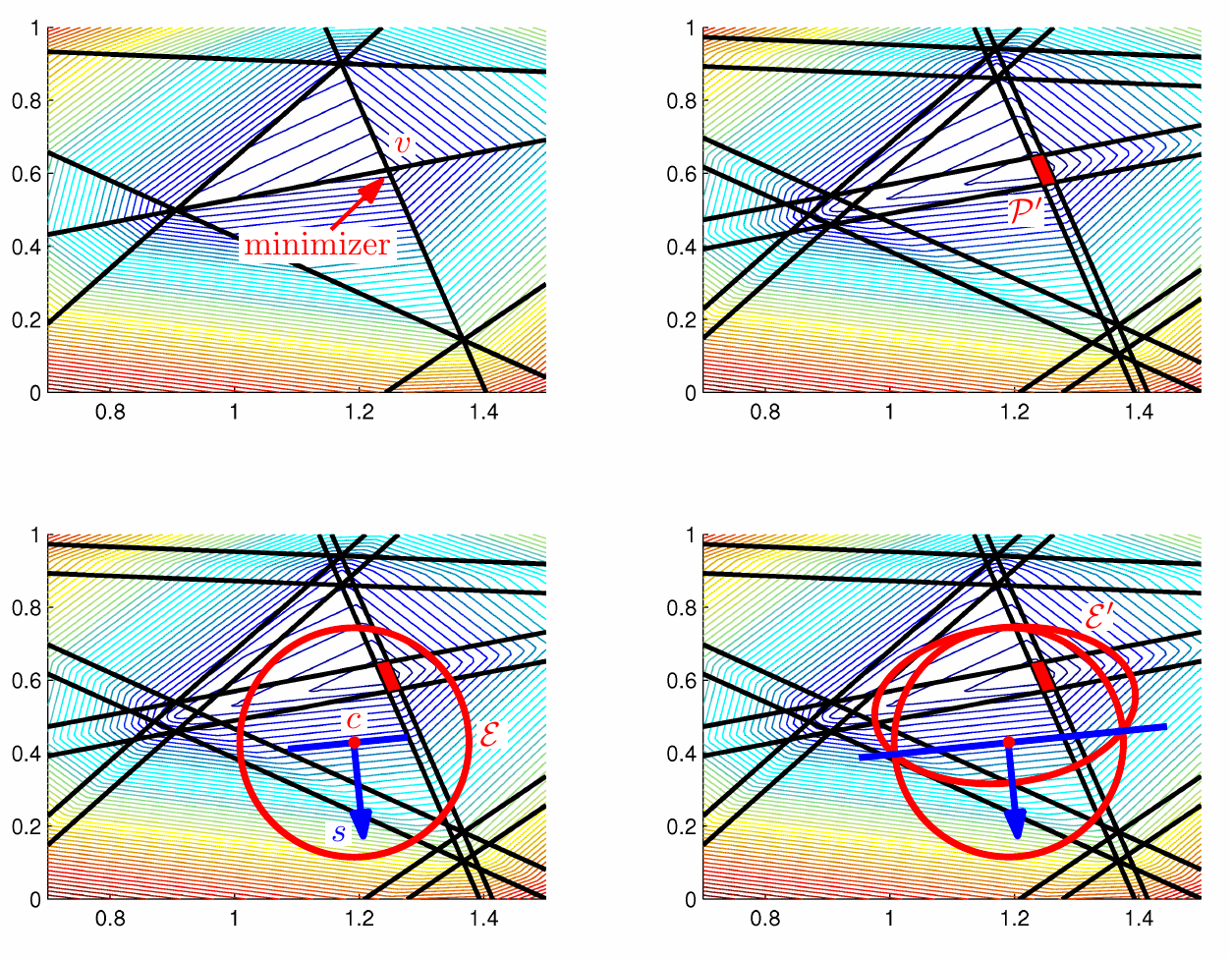}
\caption{An illustration how the ellipsoid method works on the arrangement \arr.}
\end{figure}

Figure~3 depicts the ellipsoid method and its relation to
the arrangement \arr. The first picture shows an example of a function $F(\beta) \in \textsf{CCC}$ where the minimizer is unique. The minimizer is in a vertex $v$ of the arrangement~\arr. Assume that algorithm 
$\mathfrak{A}$ is called with $t = \min F(\beta)$ and assume that the system $W\beta \leq z$ is empty. Then, polyhedron $\mathcal{P}$ given by
\eqref{eq:mainsyst} is a singleton $\{v\}$. It is dimension-deficient and cannot be localized by the ellipsoid method. 

The second picture illustrates the blow-up step (i.e.,~the replacement of $\mathcal{P}$ by $\mathcal{P}'$)
from Lemma~\ref{lem:voser}(a). The blow-up step can be understood as a replacement of a hyperplane from the arrangement 
by a narrow but full-dimensional band; see also step \{12\} of the CCC-algorithm. Now, the singleton $\mathcal{P} = \{v\}$ has been ``expanded'' into the full-dimensional region $\mathcal{P}'$. Moreover, by Lemma \ref{lem:voser}(a), the volume of $\mathcal{P}'$ is at least $2^{-q''(L)}$.
The region $\mathcal{P}'$ has the property
that $F(\beta) \leq 
t + 2^{-q'(L)}$ for all $\beta \in \mathcal{P}'$. A~point in $\mathcal{P}'$ is to be found.

The third picture shows an example of an ellipsoid $\mathcal{E}$ centered at $c$ which can occur among the iterations of the ellipsoid method. Clearly $c \not\in \mathcal{F}$, and thus the membership oracle replies ``NO''. The separation oracle
returns $s$ given by \eqref{eq:grad}; in fact, it is the gradient of $F(\beta)$ in $c$ (geometrically: it is a vector orthogonal to the contour line of $F$ in $c$). 

\emph{Remark.} In general, the gradient in $F(c)$ need not exist. However, the choice of $\pi$ in \eqref{eq:permcc} and 
\eqref{eq:grad} is still correct. (The selection of $\pi$ in 
\eqref{eq:permcc} can be seen as a selection 
of a representative from the polar cone of the graph of $F$ in $c$, sometimes referred to as \emph{sub-gradient}).

The fourth picture shows the next ellipsoid $\mathcal{E}'$ constructed
by \eqref{eq:cut}. Observe that initially we had $\mathcal{P}' \subseteq \mathcal{E}$ and now we have $\mathcal{P}' \subseteq \mathcal{E}'$. This invariant can be preserved 
from the very first iteration.
Indeed, for every vertex $w$ of $\mathcal{P}'$ it holds true that
$\bitsize(w)$ is polynomially bounded in $L$, and
all vertices (and thus the entire region $\mathcal{P}'$)
can be covered by the initial ellipsoid \eqref{eq:inieli} by a suitable choice of the polynomial $q'''$. (This insight can be used only if $\mathcal{P}$ is bounded as in the example from Figure~3.)


\section{Concluding remarks and challenges} 

We defined two classes of functions, \textsf{GEN} and \textsf{CCC}, sharing a common property: they are cell-wise linear functions
on a certain arrangement of hyperplanes. Our motivation came from robust regression, where rank estimators are of central importance;
the estimators form a subclass of \textsf{CCC}-functions. The class \textsf{GEN} is a natural generalization.

Optimization of a \textsf{GEN}-function
is (a kind of) combinatorial optimization problem and requires an exhaustive search 
over the entire system of cells. (If an algorithm omits a cell $C$, it is always possible to place the minimizer to $C$.) We have utilized special properties of the arrangement to design an algorithm with better time complexity
than previously known methods for general arrangements. This is the main result of Section~\ref{sect:general}.
The algorithm works in polynomial time as long as the dimension is assumed to be fixed.

The main result of Section~\ref{sect:CCC} is that \textsf{CCC}-functions can be minimized in polynomial time. This result is based on a linear programming formulation with $n!$ constraints. This huge number of constraints prevents us from using standard LP techniques (it is even impossible to work with the dual problem due to the excessive number of variables). 
This obstacle is overcome by the ellipsoid method with membership and separation oracles.

Strictly speaking, the CCC-algorithm can be expected to suffer from many numerical problems, e.g.~because of the rounding step based on Diophantine approximation. But, from another viewpoint, CCC-algorithm can be interpreted positively: the separator
(\ref{eq:grad}) can be regarded as a form of ``gradient'' descent method with a careful choice of the step length guaranteeing polynomial convergence. (We put ``gradient'' into quotation marks since the function is non-smooth.)

The main challenge for now is obvious: to design an interior point method for \textsf{CCC}-functions. 

\section*{Acknowledgement}
The work was supported by the Czech Science Foundation (the first, the third and the fourth author: 19-02773S, the second author: 17-13086S).

\bibliographystyle{tfnlm}
\bibliography{rank_statistiky} 

\end{document}